\documentclass[oneside]{amsart}
\usepackage{amsmath,amssymb,amsfonts,amsthm}
\usepackage{color}
\usepackage{graphicx}
\usepackage{geometry} 
\usepackage{amscd}
\usepackage{enumitem}

\geometry{a4paper,left=3.0cm, right=3.0cm, top=3cm, bottom=3cm,foot=1cm}

\title{O\MakeLowercase{n the classification of } L\MakeLowercase{andsberg   spherically symmetric} F\MakeLowercase{insler metrics}  }

\author[Elgendi]{S. G.~Elgendi}

\address{S. G.~Elgendi, Department of Mathematics, Faculty of Science, Benha
  University, Egypt, \phantom{mmmmm}
 Institute of Mathematics, University of Debrecen,
  Debrecen, Hungary} \email{salah.ali@fsci.bu.edu.eg, \ \ salahelgendi@yahoo.com.}

\keywords{spherically symmetric Finsler metrics; Landsberg metrics; Berwald metrics; inverse problem.}

\subjclass[2020]{53C60, 53B40, 58B20.}


\def\blue#1{\textcolor[rgb]{0.0,0.0,1.0}{#1}}

%

%

\newcommand{\T}{{\mathcal T}}
\newcommand{\C}{{\mathcal C}}

\newcommand{\Real}{\mathbb R}

\newcommand{\To}{\longrightarrow}

\newcommand{\tm}{\T M}

\newcommand{\TM}{\mathcal T\hspace{-1pt}M}



%


\def\+{\!+\!}

\def\={\!=\!}
\def\<{\!<\!}
\def\>{\!>\!}

\newcommand\undersym[2]{\raisebox{-7pt}{\tiny$#2$}{\kern-8pt}\mbox{$#1$}}
\newcommand\undersymm[2]{\raisebox{-7pt}{\tiny$#2$}{\kern-15pt}\mbox{$#1$}}

\setlength{\marginparwidth}{0.8in} \let\oldmarginpar\marginpar
\renewcommand\marginpar[1]{\oldmarginpar[\raggedleft\footnotesize #1]%
  {\blue{\raggedright \footnotesize \fbox{
      \begin{minipage}{1.0\linewidth}
        #1
      \end{minipage}
}}}}

\numberwithin{equation}{section} 
\numberwithin{figure}{section} 

\theoremstyle{plain}
\newtheorem*{theorem*}{Theorem}
\newtheorem{theorem}{Theorem}[section]

\newtheorem{proposition}[theorem]{Proposition}

\theoremstyle{definition}
\newtheorem{definition}[theorem]{Definition}

\theoremstyle{remark}
\newtheorem{example}{Example}
\newtheorem{remark}[theorem]{Remark}
\newtheorem*{acknowledgement*}{Acknowledgement}

\begin{document}

\maketitle

\begin{abstract}
In this paper,  as an application of the inverse problem of calculus of variations, we investigate  two compatibility conditions on the spherically symmetric Finsler metrics. By making use of these  conditions, we focus our attention on  the Landsberg spherically symmetric Finsler metrics. We classify  all spherically symmetric manifolds of Landsberg or Berwald types. For  the higher dimensions $n\geq 3$, we prove that: all Landsberg spherically symmetric manifolds are  either Riemannian or their geodesic sprays have a specific formula; all regular Landsberg spherically symmetric  metrics are Riemannian; all (regular or non-regular) Berwald  spherically symmetric metrics are Riemannian. Moreover, we establish new unicorns, i.e., new explicit  examples of  non-regular non-Berwaldian  Landsberg metrics  are obtained. For the two-dimensional case, we characterize all Berwald or  Landsberg spherically symmetric  surfaces.   
 \end{abstract}

\section{Introduction}

     A spray $S$ on an $n$-dimensional  manifold $M$ is a vector field on the tangent bundle $TM$ and arises   from  a system of second-order   ordinary
differential equations and   conversely, a spray can be associated with a system of second-order    ordinary differential equations. 
The inverse problem for sprays (or, the so-called Finsler metrizability problem) is the problem of deciding,  for a given spray $S$, whether or not there exists  a Lagrangian or a Finsler function such that $S$ is its  geodesic spray.
The inverse problem for sprays  is   a special case of the inverse problem of the calculus of variations for arbitrary systems of second-order ordinary differential equations. 
Therefore,   a   Finsler function $F$ has  a unique geodesic spray but for a given spray, to decide if it is coming from a Finsler function, one has to solve the inverse problem. 

\medskip

 In a  Berwald manifold  $(M,F)$ and for any piecewise smooth curve $c(t)$ connecting two points $p,q\in M$, the Berwald parallel translation $P_c$ is linear isometry  between $(T_pM,F_p)$  and $(T_qM,F_q)$ and this is equivalent to that the geodesic spray of $F$ is quadratic.  But, in a Landsberg manifold the   parallel translation $P_c$ along $c$ preserves the induced Riemannian metrics on the slit tangent spaces, that is,  $P_c : (T_pM \backslash \{0\},{g}_p) \longrightarrow (T_qM \backslash \{0\}, {g}_q)$ is an isometry.
   This is equivalent to that the horizontal covariant derivative of the metric tensor of $F$ with respect to the Berwald connection vanishes.
It is clear  that every Berwald manifold  is  Landsbergian. Whether there exists a regular Landsberg manifold,   which is not  Berwaldian,  is a long-standing question in Finsler geometry, which is still open.  To the best of our knowledge, it is not known a concrete  example of a regular non-Berwaldian Landsberg manifold.  In \cite{Asanov}, G. S. Asanov  introduced a class of  examples, arising from   general relativity, of non-Berwaldian Landsberg manifolds, of dimension at least $3$.     In \cite{Shen_example}, Z. Shen  generalized Asanov's class to  the  $(\alpha,\beta)$-metrics of Landsberg type. The difficulty  of finding a regular non-Berwaldian Landsberg space leads   Bao \cite{Bao} to describe them as the unicorns of Finsler geometry. For more details and more examples of non-regular non-Berwaldian Landsberg metrics, we refer to, \cite{ Elgendi-LBp,Elgendi-solutions,Elgendi-ST_condition}.
In dimension two, Zhou  \cite{Zhou} provides non-regular examples of non-Berwaldian Landsberg spherically symmetric surfaces. But it was shown in   \cite{Elgendi-Youssef} that  these surfaces   are  Berwaldian. At the end of this paper we discuss this class in some details.

\medskip

Let $\Omega $ be a convex domain in $\mathbb{R}^n$. A Finsler metric $F$ on $\Omega$  is said to be spherically symmetric if the orthogonal group acts as isometries of $F$, that is, $(\Omega,F)$ is  invariant under all rotations in $\mathbb{R}^n$. In other words, $F$ is spherically symmetric if it is orthogonally invariant in the sense that $F(Ax,Ay)=F(x,y)$, where $A\in O(n)$, for more details see \cite{Guo-Mo}. Spherically symmetric metrics were   introduced by Rutz    \cite{Rutz} where she  generalized the classic Birkhoff theorem in general relativity to the Finslerian case. Recently, spherically symmetric Finsler metrics have been paid  a lot of attention, for example, we refer to \cite{Zhou_Mo,Zhou}.

\medskip

Let $|\cdot|$ and $\langle \cdot , \cdot \rangle$ be the standard Euclidean norm and the inner product on $\mathbb{R}^n$.  A Finsler metric $F$ on $\mathbb{B}^n(r_0)\subset\mathbb{R}^n$   is  spherically symmetric if and only if there exists  a positive $C^\infty$ function $\phi $ such that
$F(x,y)= u \phi(r,s)$
where  $r=|x|$, $u=|y|$ and $s=\frac{\langle x, y \rangle}{|y|}$ and $(x,y)\in T\mathbb{B}^n(r_0)\backslash \{0\}$. The geodesic spray $S$ of $F$ is given by
 $$ G^i=uPy^i+u^2 Qx^i.$$

Now,  since a spherically symmetric metric $F$  is written in the form  $F=u\phi$ then we can talk about $\phi$ rather than $F$. For a given function    $\phi$ we have the geodesic spray and hence the functions $P$ and $Q$. But if     $P$ and $Q$ are given, then we have to solve the inverse problem to decide if there exists a function $\phi$ such that the geodesic spray is given by $P$ and $Q$. 

\medskip

In this paper, we solve the inverse problem for the spherically symmetric metrics and find compatibility conditions on  $P$, $Q$ and $\phi$, that is for   given $P$ and $Q$ then the compatibility conditions  provide the function $\phi$ where the geodesic spray is given by $P$ and $Q$ (see Proposition \ref{Comp_C_C_2}).

Depending on the compatibility conditions, we classify all  Landsberg spherically symmetric Finsler metrics; that is, we prove that  a Landsberg spherically symmetric Finsler metric of dimension $n\geq3$ is either Riemannian or the geodesic  spray is given by the functions
$$ P=c_1 s+\frac{c_2}{r^2}\sqrt{r^2-s^2}  , \quad Q=\frac{1}{2}c_0 s^2-\frac{c_2 s}{r^4}\sqrt{r^2-s^2}+c_3,$$
where $c_0$, $c_1$, $c_2$, $c_3$ are arbitrary functions of $r$ (see Theorem \ref{Theorem_A}).   We find out  new non-regular Landsberg spherically symmetric metrics which are not Berwaldian. These metrics are given by $F=u\phi(r,s)$, where $\phi$ is given by 
\small{\begin{equation*}
\begin{split}
\phi(r,s)=&(5s^2-r^2)^{\frac{1}{3}}(2s^2-r^2)^{\frac{1}{6}}\exp\Bigg{(} \frac{1}{3}\operatorname{arctanh} \Bigg{(} \frac{\sqrt{5}}{10 }\frac{(\sqrt{5}s+5r)}{\sqrt{r^2-s^2}}\Bigg{)}-\frac{1}{6}\operatorname{arctanh}\Bigg{(}  \frac{\sqrt{2}}{2  }\frac{(\sqrt{2}s+2r)}{\sqrt{r^2-s^2}}\Bigg{)}\\ 
&
+ \frac{1}{3}\operatorname{arctanh} \Bigg{(} \frac{\sqrt{5}}{2  }\frac{(\sqrt{5}s-5r)}{\sqrt{r^2-s^2}}\Bigg{)}-\frac{1}{6}\operatorname{arctanh}\Bigg{(}  \frac{\sqrt{2}}{  2}\frac{(\sqrt{2}s-2r)}{\sqrt{r^2-s^2}}\Bigg{)}\Bigg{)}
,
\end{split}
\end{equation*}}
\small{\begin{equation*} 
\phi(r,s)=\frac{1}{r} (r^4-5r^2s^2+5s^4)^{\frac{1}{4}} \Big{(} \frac{A}{B}\Big{)}^{\frac{5-\sqrt{5}}{20}}\Big{(} \frac{C}{D}\Big{)}^{\frac{5+\sqrt{5}}{20}}\exp\Bigg{(} -\frac{\sqrt{5}}{10}\operatorname{arctanh} \Bigg{(}  \frac{\sqrt{5}(r^2-2s^2)}{r^2}\Bigg{)}\Bigg{)},
\end{equation*}}
where 
\begin{align*}
A:&=\left(2\sqrt{r^2-s^2}-  2r+\left(\sqrt{5}   -1\right)s\right)\sqrt{r^2-s^2}-\left(\sqrt{5} -1 \right)rs,\\
B:&= \left(2\sqrt{r^2-s^2}-2r-\left(\sqrt{5}   -1\right)s\right)\sqrt{r^2-s^2}+\left(\sqrt{5} -1  \right)rs,\\
C:&=\left(2\sqrt{r^2-s^2}-2r-\left(\sqrt{5}   +1\right)s\right)\sqrt{r^2-s^2}+\left(\sqrt{5} +1  \right)rs,\\
D:&=\left(2\sqrt{r^2-s^2}-2r+\left(\sqrt{5}   +1\right)s\right)\sqrt{r^2-s^2}-\left(\sqrt{5} +1  \right)rs.\\
\end{align*}
For more details about the above two metrics, see Examples \ref{Ex:1} and \ref{Ex:2}.
  We show that    all (regular or non-regular) Berwald spherically symmetric metrics of dimension $n\geq 3$ are Riemannian  (see Theorem \ref{Th_Riemannian}). Also,  we prove that all regular Landsberg spherically symmetric metrics of dimension $n\geq 3$ are Riemannian  (see Theorem  \ref{Reg_Lands_Riem_D_3}).   
 
 \medskip
  
Finally, in Section 6,  we focus our attention on  the two-dimensional case. We characterize all Berwaldian and Landsbergian spherically symmetric surfaces. We prove that a spherically symmetric Finsler surface is Berwald if and only if 
$$P=b_1 s+ \frac{b_2}{ \sqrt{r^2-s^2}} +\frac{b_3 (r^2-2s^2)}{\sqrt{r^2-s^2}} ,$$
$$ Q= b_0 s^2+\frac{1}{2} b_1+ \frac{b_2 s (r^2-2 s^2)}{r^4 \sqrt{r^2-s^2}} -\frac{b_3 s (3r^2-2s^2)}{r^2\sqrt{r^2-s^2}}-\frac{a  }{r^2} s \sqrt{r^2-s^2},$$
where $a$, $b_0$, $b_1$, $b_2$, $b_3$ are arbitrary functions of $r$ and to be chosen such  that the compatibility conditions are satisfied. We discuss the Landsberg surfaces. Moreover, we give a note on a class of Landsberg surfaces mentioned in \cite{Zhou} which,   in fact, is Berwaldian.

\section{Preliminaries}

Let $M$ be an $n$-dimensional manifold and $(TM,\pi,M)$ be its tangent
bundle. We denote by $\TM:=TM\setminus \{0\}$   the set of nonzero tangent
vectors.   Also, $(x^i) $ denote  the local coordinates on the base manifold
$M$ and   $(x^i, y^i)$ are  the induced coordinates on $TM$.   
 The vector $1$-form $J$ on $TM$ defined by $J = \frac{\partial}{\partial y^i} \otimes dx^i$ is called the
natural almost-tangent structure of $T M$. The vertical vector field
$\C=y^i\frac{\partial}{\partial y^i}$ on $TM$ is called the canonical or the
Liouville vector field.

A vector field $S\in \mathfrak{X}(\T M)$ is called a spray if $JS = \C$ and
$[\C, S] = S$. Locally, a spray can be expressed as follows
\begin{equation*}
  \label{eq:spray}
  S = y^i \frac{\partial}{\partial x^i} - 2G^i\frac{\partial}{\partial y^i},
\end{equation*}
where the spray coefficients $G^i=G^i(x,y)$ are $2$-homogeneous
functions in  $y$. 

A nonlinear connection on $TM$ is defined by an $n$-dimensional distribution $H : z \in \tm \rightarrow H_z(\tm) $ which  is supplementary to the vertical distribution. This means that for all $z \in \tm$, we have
$$T_z(\tm) = H_z(\tm) \oplus V_z(\tm).$$

Every spray $S $ induces a canonical nonlinear connection through the corresponding horizontal and vertical projectors,
\begin{equation*}
  \label{projectors}
    h=\frac{1}{2}  (Id + [J,S]), \,\,\,\,            v=\frac{1}{2}(Id - [J,S])
\end{equation*}
 With respect to the induced nonlinear connection,  we have two associated  projectors $h$ and $v$ which are  expressed as follows
$$h=\frac{\delta}{\delta x^i}\otimes dx^i, \quad\quad v=\frac{\partial}{\partial y^i}\otimes \delta y^i,$$
$$\frac{\delta}{\delta x^i}=\frac{\partial}{\partial x^i}-G^j_i(x,y)\frac{\partial}{\partial y^j},\quad \delta y^i=dy^i+G^i_j(x,y)dx^j, \quad G^j_i(x,y)=\frac{\partial G^j}{\partial y^i}.$$

If $X\in  \mathfrak{X}(M)$, $ i_{X}$ and $\mathcal{L}_X$ denote the interior product by $X$ and the Lie derivative  in the direction of $X$, respectively.   Every vector $\ell$-form $L$ defines two graded derivations $i_L$ and $d_L$ of the exterior algebra of $M$ such that
 $$i_Lf=0, \quad i_Ldf=df\circ L, \quad d_L:=[i_L,d]=i_L\circ d-(-1)^{\ell-1}di_L,$$
 where $f\in C^\infty(M)$ and $df$ is the exterior derivative of $f$. 
  If $X\in \mathfrak{X}(M)$ is a vector field, then $i_{X}$ is simply the
interior product by $X$ and $d_X=\mathcal{L}_X$ the Lie derivative with
respect to $X$.

\medskip

 A Finsler function  on $M$ is defined as follows:

\begin{definition}
A Finsler manifold  of dimension $n$ is a pair $(M,F)$, where $M$ is a  differentiable manifold of dimension $n$ and $F$ is a map, called Finsler function or Finsler metric,   $$F: TM \To \Real ,\vspace{-0.1cm}$$  such that{\em:}
 \begin{description}
    \item[(a)] $F$ is smooth and strictly positive on $\T M$ and $F(x,y)=0$ if and only if $y=0$,
    \item[(b)]$F$ is positively homogeneous of degree $1$ in the direction argument $y${\em:}
    $d_{\mathcal{C}} F=\mathcal{L}_\C F=F$,
    \item[(c)] The metric tensor $g_{ij}= \dot{\partial}_i \dot{\partial}_j E$ has maximal rank on $\T M$, where $E:=\frac{1}{2}F^2$ is the energy function.
 \end{description}
 In this case $(M,F)$ is called a regular Finsler space. If $F$ is not smooth or even not defined in some directions, then we call the Finsler function non-regular. In other words, by a non-regular Finsler metric $F$ we mean that $F$ does not satisfy the  condition (a) or (c) on certain directions  of $TM$.
 \end{definition}

 Since the $2$-form $dd_JE$ is non-degenerate,  the Euler-Lagrange equation
\begin{equation*}
  i_Sdd_JE=-dE
\end{equation*}
uniquely determines a spray $S$ on $TM$.  This spray is called the
\emph{geodesic spray} of the Finsler function $F$.

\begin{definition}
  A spray $S$ on a manifold $M$ is called \emph{Finsler metrizable} if there
  exists a Finsler function $F$ such that the geodesic spray of the Finsler
  manifold $(M,F)$ is $S$.
  \end{definition}

  It is known that a spray $S$ is Finsler metrizable if and only if there exists a non-degenerate  solution $F$    for the system
  \begin{equation}
  \label{metrizable_system}
  d_hF=0, \quad d_\C F=F,
  \end{equation}
  where $h$ is the horizontal projector associated to $S$. 
  
  So,  to solve the inverse problem for  a given spray $S$,  we have to find  a non-degenerate solution $F$ for the system \eqref{metrizable_system}.

\section{Spherically symmetric metrics}

Let $|\cdot|$ and $\langle \cdot , \cdot \rangle$ be the standard Euclidean norm and inner product on $\mathbb{R}^n$.  A Finsler metric $F$ on $\mathbb{B}^n(r_0)\subset\mathbb{R}^n$   is  spherically symmetric if and only if there exists a function $\phi:[0,r_0)\times\mathbb{R}^n\to \mathbb{R}$ such that
$$F(x,y)=|y|\phi\left(|x|,\frac{\langle x, y \rangle}{|y|}\right)$$
where $(x,y)\in T\mathbb{B}^n(r_0)\backslash \{0\}$. Or simply we write $F=u\ \phi(r,s)$ where $r=|x|$, $u=|y|$ and $s=\frac{\langle x, y \rangle}{|y|}$.
 
 \medskip

The spherically symmetric metrics are a special case of the general $(\alpha,\beta)$-metrics. The general $(\alpha,\beta)$-metrics are introduced and studied by C. Yu and H. Zhu \cite{Yu-Zhu}. Therefore, the spherically symmetric metrics $F=u\phi(r,s)$ on $\mathbb{B}^n(r_0)$ are regular if and only if $\phi$ is positive and $C^\infty$ function and 
\begin{equation}
\label{Regular_condition}
\phi-s\phi_s>0,\quad \phi-s\phi_s+(r^2-s^2)\phi_{ss}>0
\end{equation}
when $n\geq 3$ or $\phi-s\phi_s+(r^2-s^2)\phi_{ss}>0$  when $n=2$ for all $|s|\leq r<r_0$.  
Throughout, the subscript $s$ (resp. $r$) refers to the derivative with respect to $s$ (rep. $r$). 
The spherically symmetric Finsler metrics are studied in many papers for example, we refer   to, \cite{Guo-Mo,Zhou_Mo,Zhou}.

\begin{remark}
It should be noted that throughout, the results and formulae work and are valid for both regular and non-regular spherically symmetric metrics unless it is specified. So we consider the most general case,  that is, the non-regular case and when the situation needs, we mention clearly that the metric is regular.
\end{remark}

The components $g_{ij}$ of the metric tensor of the spherically symmetric metric $F=u \phi(r,s)$ are given by  
\begin{align}
\label{Eq:g^ij}
g_{ij}=&\sigma_0\ \delta_{ij}+\sigma_1\ \delta_{ik}\delta_{hj}x^kx^h+\frac{\sigma_2}{u} (\delta_{ik}\delta_{hj}+\delta_{jk}\delta_{hi})x^hy^k+\frac{\sigma_3}{u^2}\delta_{ik}\delta_{hj}y^ky^h,
\end{align}
where  $$\sigma_0=\phi(\phi-s\phi_s),\quad \sigma_1= \phi_s^2+\phi\phi_{ss},\quad \sigma_2= (\phi-s\phi_s)\phi_s-s\phi\phi_{ss},  \quad \sigma_3= s^2\phi\phi_{ss}-s(\phi-s\phi_s)\phi_s.$$
 The above formula can be found in \cite{Zhou} and to keep the indices consistent in  both sides of a tensorial  equation,   we  write  for example,   $\delta_{ik}y^k$ when there is a  $y^i$ and $i$ is a lower index.

\medskip

The components $g^{ij}$ of the inverse metric tensor are given by \cite{Zhou} as follows
\begin{align}
\label{Eq:g^ij}
g^{ij}=&\rho_0\delta^{ij}+ \frac{\rho_1}{u^2}y^iy^j+ \frac{\rho_2}{u} (x^iy^j+x^jy^i)+\rho_3x^ix^j,
\end{align}
where
$$
 \rho_0=\frac{1}{\phi(\phi-s\phi_s)}, \quad \rho_1=\frac{(s\phi+(r^2-s^2)\phi_s)(\phi\phi_s-s\phi_s^2-s\phi_{ss})}{\phi^3(\phi-s\phi_s)(\phi-s\phi_s+(r^2-s^2)\phi_{ss})},$$
 $$\rho_2=-\frac{\phi\phi_s-s\phi_s^2-s\phi_{ss}}{\phi^2(\phi-s\phi_s)(\phi-s\phi_s+(r^2-s^2)\phi_{ss})},\quad \rho_3=-\frac{\phi_{ss}}{\phi(\phi-s\phi_s)(\phi-s\phi_s+(r^2-s^2)\phi_{ss})}.$$
  Moreover, the geodesic spray coefficients  $G^i$ of $F$ are  given by 
\begin{equation}\label{G}
  G^i=uPy^i+u^2 Qx^i,
\end{equation}
where the functions $P$ and $Q$ have the following formulae
\begin{equation}\label{P,Q}
 Q:=\frac{1}{2 r}\frac{ -\phi_r+s\phi_{rs}+r\phi_{ss}}{\phi-s\phi_s+(r^2-s^2)\phi_{ss}}, \quad P:=-\frac{Q}{\phi}(s\phi +(r^2-s^2)\phi_s)+\frac{1}{2r\phi}(s\phi_r+r\phi_s).
\end{equation}

For the geodesic spray of the  spherically symmetric metric $F$, we have the associated nonlinear connection whose  components $G^i_j$ are defined by  $G^i_j:=\frac{\partial G^i}{\partial y^j}$. Using the following identities:
\begin{equation}
\label{Eq:derivatives}
    \begin{split}
        \frac{\partial r}{\partial x^j}&= \frac{1}{r}\delta_{jk}x^k, \quad  \frac{\partial u}{\partial y^j}= \frac{1}{u}\delta_{jk}y^k,  \quad  \frac{\partial u}{\partial x^j}= 0,         \quad  \frac{\partial r}{\partial y^j}= 0,   \\
         \frac{\partial s}{\partial x^j}&= \frac{1}{u}\delta_{jk}y^k, \quad  \frac{\partial s}{\partial y^j}= \frac{1}{u}\delta_{jk}x^k-\frac{s}{u^2}\delta_{jk}y^k ,   \\  
    \end{split}
\end{equation}
the components $G^i_j$ can be calculated as follows
\begin{equation}
\label{Eq:G^i_j}
G^i_j=u P \delta^i_j+P_s \delta_{kj}x^ky^i+  \frac{1}{u}(P-sP_s) \ \delta_{kj}y^ky^i+u Q_s\delta_{jk}x^ix^k+(2Q-sQ_s)\delta_{jk}x^iy^k.
\end{equation}
The components $G^i_{jk\ell}$ of the Berwald curvature are defined by
$$G^i_{jk\ell}=\frac{\partial^2 }{\partial y^\ell \partial y^k} G^i_j.$$
For a spherically symmetric Finsler metric $F=u\ \phi(r,s)$,  we use the identities \eqref{Eq:derivatives} and \eqref{Eq:G^i_j} to find the components $G^i_{jk\ell}$ as follows
\begin{equation*}
    \begin{split}
        G^i_{jk\ell}= & \frac{P_{ss}}{u}(\delta^i_j \delta_{hk}\delta_{t\ell}+\delta^i_\ell \delta_{hk}\delta_{tj}+\delta^i_k \delta_{hj}\delta_{t\ell})x^hx^t+\frac{1}{u}(P-sP_s)(\delta^i_j \delta_{k\ell}+\delta^i_k \delta_{j\ell}+\delta^i_\ell \delta_{jk})\\
        &-\frac{sP_{ss}}{u^2}(\delta^i_j(\delta_{hk}\delta_{t\ell}+\delta_{h\ell}\delta_{tk})+\delta^i_k(\delta_{hj}\delta_{t\ell}+\delta_{h\ell}\delta_{tj})+\delta^i_\ell(\delta_{hk}\delta_{tj}+\delta_{hj}\delta_{tk}))x^hy^t\\
        &-\frac{sP_{ss}}{u^2}(\delta_{jk}\delta_{h\ell}+\delta_{j\ell}\delta_{kh}+\delta_{k\ell}\delta_{jh})x^h y^i
        +\frac{1}{u}(Q_s-sQ_{ss})(\delta_{jk}\delta_{h\ell}+\delta_{j\ell}\delta_{kh}+\delta_{k\ell}\delta_{jh})x^i x^h\\
        &+\frac{1}{u^3}(s^2P_{ss}+sP_s-P)((\delta^i_j\delta_{kh}\delta_{t\ell}+\delta^i_k\delta_{jh}\delta_{t\ell}+\delta^i_\ell\delta_{kh}\delta_{tj})y^hy^t+(\delta_{jk}\delta_{\ell h}+\delta_{j\ell}\delta_{k h}+\delta_{k\ell}\delta_{j h})y^hy^i)\\
        &+\frac{1}{u^5}(3P-s^3P_{sss}-6s^2P_{ss}-3sP_s)\delta_{jh}\delta_{km}\delta_{\ell t} y^hy^my^ty^i+\frac{P_{sss}}{u^2}\delta_{jh}\delta_{k m}\delta_{\ell t} x^hx^mx^ty^i\\
        &+\frac{1}{u^4}(s^2P_{sss}+3sP_{ss})(\delta_{jm}\delta_{kh }\delta_{\ell t} +\delta_{jm}\delta_{kt }\delta_{\ell h}+\delta_{\ell m}\delta_{kh }\delta_{j t}  )x^ty^my^hy^i\\
        &-\frac{1}{u^3}(P_{ss}+sP_{sss})(\delta_{km}\delta_{jh }\delta_{\ell t} +\delta_{jm}\delta_{kh }\delta_{\ell t}+\delta_{jm}\delta_{\ell h }\delta_{k t})x^tx^my^hy^i+\frac{Q_{sss}}{u}\delta_{km}\delta_{jh }\delta_{\ell t}x^hx^mx^tx^i\\
        &+\frac{1}{u^3}(s^2Q_{sss}+sQ_{ss}-Q_s)(\delta_{km}\delta_{jh }\delta_{\ell t}+\delta_{\ell m}\delta_{kh }\delta_{j t}+\delta_{\ell h}\delta_{km }\delta_{j t})x^hx^iy^my^t
        -\frac{sQ_{sss}}{u^2}(\delta_{\ell m}\delta_{jh }\delta_{k t}\\
        &+\delta_{\ell t}\delta_{kh }\delta_{j m}+\delta_{\ell m}\delta_{kh }\delta_{j t})x^hx^ix^my^t+\frac{1}{u^4}  (3sQ_s-3s^2Q_{ss}-s^3Q_{sss})\delta_{\ell m}\delta_{kh }\delta_{j t}y^ty^my^hx^i\\
        &+\frac{1}{u^2}(s^2Q_{ss}-sQ_s)(\delta_{k\ell }\delta_{hj }+\delta_{j\ell }\delta_{hk }+\delta_{kj }\delta_{h\ell })y^hx^i.
    \end{split}
\end{equation*}
The above formula of the Berwald curvature can be found, for example,  in \cite{Zhou_Mo}.  By straightforward calculations, the components $E_{ij}:=G^h_{ijh}$ of the mean Berwald curvature can be calculated as follows: 
\begin{equation}
\label{Mean_curv.}
 \begin{split}
E_{ij} =&\frac{1}{u}((n+1)(P-sP_s)+(r^2-s^2)(Q_s-sQ_{ss}))\delta_{ij}+ \frac{1}{u^3}((n+1)(s^2P_{ss}+sP_s-P)\\
&+r^2(s^2Q_{sss}+sQ_{ss}-Q_s)+3s^2Q_s-3s^3Q_{ss}-s^4Q_{sss})\delta_{ih}\delta_{jk} y^h y^k\\
&+\frac{1}{u}((n+1)P_{ss}+2(Q_s-sQ_{ss})+(r^2-s^2)Q_{sss})\delta_{ih}\delta_{jk}x^hx^k\\
&-\frac{s}{u^2}((n+1)P_{ss}+2(Q_s-sQ_{ss})+(r^2-s^2)Q_{sss})(\delta_{ih}\delta_{jk}+\delta_{ik}\delta_{jh}) x^h y^k .
\end{split}
\end{equation}
One can rewrite  $E_{ij}$ as follows 
\begin{equation} 
\label{E_H}
E_{ij}=\frac{H}{u}\delta_{ij}-\frac{1}{u^3}(sH_s+H) \delta_{hi}\delta_{kj}y^h y^k+\frac{H_s}{su^2}(s(\delta_{hi}\delta_{kj}+\delta_{ki}\delta_{hj})x^hy^k-u \delta_{hi}\delta_{kj}x^hx^k),
\end{equation}
where 
 $$H:=(n+1)(P-sP_s)+(r^2-s^2)(Q_s-sQ_{ss}).$$
 Now,   we have the following proposition.
\begin{proposition} 
If the mean Berwald curvature vanishes, then either $n=2$ or $H=0$.
\end{proposition}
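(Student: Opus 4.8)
The plan is to read $H$ directly off the tensorial expression \eqref{E_H} for $E_{ij}$ by contracting against a vector orthogonal to both $x$ and $y$. Assume $E_{ij}=0$, and recall that $H$, like $P$ and $Q$, is a function of $(r,s)$ alone; we must show that if $n\geq 3$ then $H=0$.

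First I would fix an arbitrary point $(x,y)$ with $y\neq 0$ and set $W:=\{v\in\mathbb{R}^n:\langle v,x\rangle=\langle v,y\rangle=0\}$, the orthogonal complement of $\mathrm{span}\{x,y\}$. Since $n\geq 3$ we have $\dim W\geq n-2\geq 1$, so there is a nonzero $v\in W$. Contracting \eqref{E_H} with $v^iv^j$ and using $\delta_{hi}x^hv^i=\langle x,v\rangle=0$ and $\delta_{kj}y^kv^j=\langle y,v\rangle=0$, every term of \eqref{E_H} is killed except the first: each of the remaining terms involves the free indices $i,j$ only through the combinations $x^h\delta_{hi}$, $x^k\delta_{kj}$, $y^h\delta_{hi}$, $y^k\delta_{kj}$, so each collapses to a multiple of $\langle x,v\rangle^2$, $\langle x,v\rangle\langle y,v\rangle$, or $\langle y,v\rangle^2$. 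Hence
$$0=E_{ij}v^iv^j=\frac{H}{u}\,|v|^2,$$
and since $u=|y|>0$ and $|v|\neq 0$ we get $H=0$ at $(x,y)$. As $(x,y)$ was an arbitrary point of the slit tangent bundle, and $(r,s)$ sweeps out the whole admissible range as $(x,y)$ varies, this gives $H\equiv 0$.

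I do not anticipate a real obstacle: the only thing to verify carefully is that the chosen $v$ annihilates the entire non-$\delta_{ij}$ part of \eqref{E_H}, which is the routine check indicated above. The hypothesis $n\geq 3$ is used exactly once and is essential: when $n=2$ and $x,y$ are linearly independent one has $W=\{0\}$, the contraction is vacuous, and the best one can assert is the stated alternative ``$n=2$ or $H=0$''. Equivalently, one may argue by a rank count --- the last three tensors appearing in \eqref{E_H} are quadratic forms of rank at most $2$, so for $n\geq 3$ a vanishing combination of them with $(H/u)\delta_{ij}$ forces $H/u=0$ --- but the contraction with $v\in W$ makes this completely explicit.
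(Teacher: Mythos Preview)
Your argument is correct. The key observation --- that for $n\geq 3$ one can always find a nonzero $v$ orthogonal to both $x$ and $y$, and that contracting \eqref{E_H} with $v^iv^j$ annihilates every term except $(H/u)\,|v|^2$ --- is sound, and the remaining terms do indeed collapse to multiples of $\langle x,v\rangle$ or $\langle y,v\rangle$ as you say.

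This is a genuinely different route from the paper's proof. The paper instead performs two contractions of \eqref{E_H}: one with $\delta^{ij}$, giving $s(n-1)H-(r^2-s^2)H_s=0$, and one with $x^ix^j$, giving $sH-(r^2-s^2)H_s=0$; subtracting eliminates the $H_s$ terms and yields $s(n-2)H=0$, whence the alternative. Your approach bypasses $H_s$ altogether by choosing a contraction direction that kills the rank-$\leq 2$ part of $E_{ij}$ in one stroke, which is shorter and makes the geometric reason for the dimensional threshold transparent. The paper's approach, on the other hand, is purely algebraic and needs no auxiliary vector; it also produces the intermediate identity $sH-(r^2-s^2)H_s=0$, which the paper reuses later in the two-dimensional analysis (Proposition~\ref{2D_E_E_ij=0}).
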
 
 \begin{proof}
 Assume that $E_{ij}=0$, then contracting \eqref{E_H} by $\delta^{ij}$, and using the facts that $\delta_{ij}y^iy^j=u^2$, $\delta_{ij}x^ix^j=r^2$ and $\delta_{ij}x^iy^j=\langle x,y\rangle=su$, we have
 $$ s(n-1)H-(r^2-s^2)H_s=0.$$
 Contracting \eqref{E_H} by $x^{i}x^{j}$ implies
 $$ sH-(r^2-s^2)H_s=0.$$
 Subtracting the above two equations,    yields
 $$s(n-2)H=0.$$
 Which holds for all $s$, then we obtain that $n=2$ or $H=0$ and hence the proof is completed.
  \end{proof}

  To avoid confusions, we have to clarify a point   mentioned in \cite[Proposition 2.1]{Zhou} by providing the following remark.
  \begin{remark} 
  It is clear that if $H=0$, then the mean Berwald curvature vanishes, that is $E_{ij}=0$. By making use of the above proposition, when $n=2$, the condition $E_{ij}=0$ does not imply  $H=0$ contrary to what mentioned in  \cite{Zhou} and  discussed in \cite{Elgendi-Youssef}.    For the higher dimensions   $n\geq3$,  $E_{ij}=0$ if and only if $H=0$.
  \end{remark}

  The scalar trace $E$ of the mean Berwlad curvature is $E=g^{ij}E_{ij}$.  For a spherically symmetric Finsler metric,   the scalar trace of the mean Berwald curvature is given as follows.
  \begin{proposition}\label{Scalar_E}
  The scalar trace $E$ of the mean Berwald curvature of a spherically symmetric Finsler metric is given by
  $$E=\frac{1}{su}\left(s((n-1)\rho_0+\rho_3(r^2-s^2))H-(r^2-s^2)(\rho_0+\rho_3(r^2-s^2))H_s\right).$$  
  \end{proposition}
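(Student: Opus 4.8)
The plan is to compute the contraction $E = g^{ij}E_{ij}$ directly, using the explicit formula \eqref{Eq:g^ij} for the inverse metric $g^{ij}$ together with the form \eqref{E_H} of the mean Berwald curvature. Since both tensors are built out of the three "elementary" symmetric expressions $\delta^{ij}$, $y^iy^j$, $x^iy^j+x^jy^i$ and $x^ix^j$, the contraction reduces to a finite number of scalar contractions among these, all of which are determined by the identities $\delta_{ij}y^iy^j=u^2$, $\delta_{ij}x^ix^j=r^2$, $\delta_{ij}x^iy^j=\langle x,y\rangle = su$, and $\delta_{ij}\delta^{ij}=n$. So the whole computation is bookkeeping.

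Concretely, I would first record the relevant contractions of $E_{ij}$ in \eqref{E_H} against each of the four tensors appearing in $g^{ij}$:
\begin{itemize}
\item $\delta^{ij}E_{ij}$, which by the contraction already performed in the proof of the previous proposition equals $\tfrac{1}{su}\bigl(s(n-1)H-(r^2-s^2)H_s\bigr)$;
\item $\tfrac{1}{u^2}y^iy^jE_{ij}$, which one checks vanishes (the three terms in \eqref{E_H} cancel: $H - (sH_s+H) + \tfrac{H_s}{su^2}(2su\cdot u - u\cdot su) = H - sH_s - H + sH_s = 0$, consistent with $E_{ij}y^j = 0$, a general feature of the mean Berwald curvature);
\item $\tfrac{1}{u}(x^iy^j+x^jy^i)E_{ij}$, which likewise vanishes since $E_{ij}y^j=0$;
\item $x^ix^jE_{ij}$, which by the contraction performed in the previous proof equals $\tfrac{1}{su}\bigl(r^2(sH-(r^2-s^2)H_s) - \text{(lower-order)}\bigr)$; more carefully, $r^2 H - (sH_s+H)(su)^2/u^2 + \tfrac{H_s}{su^2}\bigl(s\cdot 2\cdot r^2\cdot su - u\cdot r^2\cdot r^2\bigr) = \tfrac{1}{s}\bigl(sH - (r^2-s^2)H_s\bigr)$ after simplification, which matches the second displayed equation in the earlier proof.
\end{itemize}
Then $E = \rho_0\,\delta^{ij}E_{ij} + \rho_1\cdot 0 + \rho_2\cdot 0 + \rho_3\, x^ix^jE_{ij}$, and substituting the two surviving terms and collecting the coefficients of $H$ and of $H_s$ gives exactly
$$E = \frac{1}{su}\Bigl(s\bigl((n-1)\rho_0+\rho_3(r^2-s^2)\bigr)H - (r^2-s^2)\bigl(\rho_0+\rho_3(r^2-s^2)\bigr)H_s\Bigr),$$
as claimed.

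The only genuine subtlety — and the step where an error is easiest to make — is evaluating $x^ix^jE_{ij}$ from \eqref{E_H}, because all three terms contribute and one must be careful with the mixed term $\tfrac{H_s}{su^2}\bigl(s(\delta_{hi}\delta_{kj}+\delta_{ki}\delta_{hj})x^hy^k - u\,\delta_{hi}\delta_{kj}x^hx^k\bigr)$: contracting with $x^ix^j$ produces $s\cdot 2\langle x,y\rangle r^2 = 2sr^2\langle x,y\rangle = 2s^2r^2 u$ from the first piece and $-u r^4$ from the second, so this term gives $\tfrac{H_s}{su^2}(2s^2r^2u - ur^4) = \tfrac{H_s r^2}{su}(2s^2-r^2)$, while the first two terms of \eqref{E_H} give $\tfrac{r^2 H}{u}$ and $-\tfrac{(sH_s+H)s^2u^2}{u^3} = -\tfrac{s^2(sH_s+H)}{u}$. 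Summing: $\tfrac{r^2H}{u} - \tfrac{s^2(sH_s+H)}{u} + \tfrac{H_sr^2(2s^2-r^2)}{su} = \tfrac{1}{su}\bigl(sr^2H - s^3H_s - s^3H + 2s^2r^2H_s - r^4H_s\bigr)$; the $H$-terms combine to $s(r^2-s^2)H$ and the $H_s$-terms to $-(r^4 - 2s^2r^2 + s^4)H_s = -(r^2-s^2)^2H_s$, so $x^ix^jE_{ij} = \tfrac{1}{su}\bigl(s(r^2-s^2)H - (r^2-s^2)^2 H_s\bigr)$, consistent with the earlier proof. Everything else is routine substitution, and no new idea beyond the linearity of the contraction is needed.
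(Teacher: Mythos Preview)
Your argument is correct and follows exactly the paper's route: use $y^iE_{ij}=0$ to kill the $\rho_1$ and $\rho_2$ contributions, then contract \eqref{E_H} with $\delta^{ij}$ and with $x^ix^j$ and assemble. The only slip is a typo in the intermediate display for $x^ix^jE_{ij}$, where $-s^3H_s$ should read $-s^4H_s$; your subsequent simplification to $-(r^2-s^2)^2H_s$ already uses the correct exponent, so the conclusion is unaffected.
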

  \begin{proof}
 By using \eqref{Eq:g^ij} and the fact that $y^iE_{ij}=0$,  the scalar trace of the mean Berwald curvature is calculated as follows 
   \begin{equation*}
    \begin{split}
       E=&g^{ij}E_{ij}\\
       =&\frac{\rho_0}{s u}\left((n-1)sH -(r^2-s^2)H_s)+\frac{\rho_3}{s u}(s(r^2-s^2)H-(r^2-s^2)^2H_s\right)\\
       =&\frac{1}{su}\left(s((n-1)\rho_0+\rho_3(r^2-s^2))H-(r^2-s^2)(\rho_0+\rho_3(r^2-s^2))H_s\right). 
    \end{split}
\end{equation*}
  \end{proof}

The components $L_{jk\ell}$ of the Landsberg curvature are defined by
$$L_{jk\ell}=-\frac{1}{2} FG^h_{jk\ell} \frac{\partial F}{\partial y^h}.$$
For a spherically symmetric Finsler metric $F=u\ \phi(r,s)$,   the components $L_{jk\ell}$ are given by
\begin{equation}
\label{L_ijk}
    \begin{split}
       L_{jk\ell}=& -\frac{\phi}{2}\Big{(}L_1\delta_{jh}\delta_{km}\delta_{\ell t} x^hx^mx^t+\frac{3sL_2-s^3L_1}{u^3}\delta_{jh}\delta_{km}\delta_{\ell t} y^hy^my^t+L_2(\delta_{k\ell}\delta_{jh}+\delta_{j\ell}\delta_{kh}\\
       &+\delta_{k j}\delta_{\ell h})x^h-\frac{sL_2}{u}(\delta_{k\ell}\delta_{hj}+\delta_{kh}\delta_{\ell j}+\delta_{kj}\delta_{h\ell})y^h  -   \frac{sL_1}{u}(\delta_{m k}\delta_{t\ell}\delta_{hj} +\delta_{kh}\delta_{m j}\delta_{t\ell}\\
       & +\delta_{mj}\delta_{h\ell}\delta_{k t})x^mx^ty^h + \frac{s^2L_1-L_2}{u^2}(\delta_{m k}\delta_{t\ell}\delta_{hj}+\delta_{kh}\delta_{m j}\delta_{t\ell}+\delta_{mj}\delta_{h\ell}\delta_{k t})y^my^tx^h \Big{)}
    \end{split}
\end{equation}
where
$$L_1=  3\phi_sP_{ss}+\phi P_{sss}+(s\phi+(r^2-s^2)\phi_s)Q_{sss},   $$
        $$ L_2=  -s\phi P_{ss}+\phi_s(P-sP_{s})+(s\phi+(r^2-s^2)\phi_s)(Q_{s}-sQ_{ss}).$$
    
    It is well known that a Finsler metric $F$ is said to be Berwaldain if and only if its Berwald curvature vanishes, that is, $G^h_{ijk}=0$ and this is equivalent to the fact that the geodesic spray of $F$ is quadratic in $y$. Also, $F$ is called Landsbrgain if and only if its Landsberg  curvature vanishes, that is, $L_{ijk}=0$. Paying more attention to the spherically symmetric Finsler metrics of  Landsberg type, we have the following proposition.
\begin{proposition}\label{Landsberg_cond}
If a spherically symmetric Finsler metric $F=u\phi$ is Landsbergian then  either $n=2$ or  $L_1=L_2=0$.
\end{proposition}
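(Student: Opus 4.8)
The plan is to adapt the argument used just above for the mean Berwald curvature. Formula \eqref{L_ijk} writes the whole Landsberg tensor $L_{jk\ell}$ in terms of the two scalar functions $L_1$ and $L_2$, so it suffices to feed \eqref{L_ijk} into a couple of well-chosen contractions and read off scalar identities. Throughout I would use that $L_{jk\ell}$ is totally symmetric and $y$-orthogonal, $\delta_{jh}y^hL_{jk\ell}=0$ (this holds because $L_{jk\ell}$ is built from $G^h_{jk\ell}=\partial^3G^h/\partial y^j\partial y^k\partial y^\ell$, which is $(-1)$-homogeneous in $y$), together with the elementary identities $\delta_{ij}x^ix^j=r^2$, $\delta_{ij}y^iy^j=u^2$ and $\delta_{ij}x^iy^j=su$.

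First I would contract \eqref{L_ijk} by $\delta^{k\ell}$. The resulting vector is forced to be $y$-orthogonal, hence a multiple of $\delta_{jk}x^k-\frac{s}{u}\delta_{jk}y^k$, and collecting the (many) $\delta$--$x$--$y$ terms should give
\[
\delta^{k\ell}L_{jk\ell}=-\frac{\phi}{2}\Big(L_1(r^2-s^2)+(n+1)L_2\Big)\Big(\delta_{jk}x^k-\frac{s}{u}\,\delta_{jk}y^k\Big).
\]
Next I would contract \eqref{L_ijk} by $x^jx^kx^\ell$; the same kind of bookkeeping collapses everything to the scalar
\[
x^jx^kx^\ell L_{jk\ell}=-\frac{\phi}{2}\,(r^2-s^2)^2\Big(L_1(r^2-s^2)+3L_2\Big).
\]
If $F$ is Landsbergian then $L_{jk\ell}=0$, so both right-hand sides vanish on $T\mathbb{B}^n(r_0)\backslash\{0\}$. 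Since $\phi>0$ and $r^2-s^2>0$ whenever $|s|<r$, and since the vector $\delta_{jk}x^k-\frac{s}{u}\delta_{jk}y^k$ is not identically zero (it vanishes only in the radial directions $s=\pm r$), this yields the two identities
\[
L_1(r^2-s^2)+(n+1)L_2=0,\qquad L_1(r^2-s^2)+3L_2=0,\qquad |s|<r.
\]

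Subtracting gives $(n-2)L_2=0$, valid for all admissible $(r,s)$, so either $n=2$ or $L_2=0$; in the latter case the first identity reduces to $L_1(r^2-s^2)=0$, which forces $L_1=0$ for $|s|<r$ and hence identically. This is the claimed dichotomy. The only laborious ingredient is the two contractions of the cumbersome expression \eqref{L_ijk} with $\delta^{k\ell}$ and with $x^jx^kx^\ell$; I do not anticipate any conceptual obstacle there, as it is purely mechanical once the identities $\delta_{ij}x^ix^j=r^2$, $\delta_{ij}y^iy^j=u^2$, $\delta_{ij}x^iy^j=su$ are applied term by term, exactly as in the mean Berwald curvature computation, with these two contractions now playing the role of the two traces used there.
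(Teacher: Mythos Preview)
Your proposal is correct and follows essentially the same route as the paper: the paper contracts \eqref{L_ijk} with $x^j\delta^{k\ell}$ and with $x^jx^kx^\ell$ to obtain exactly your two scalar identities $(r^2-s^2)L_1+(n+1)L_2=0$ and $(r^2-s^2)L_1+3L_2=0$, and then subtracts. The only cosmetic difference is that you first contract with $\delta^{k\ell}$ alone and then read off the $x$-component of the resulting vector, whereas the paper contracts with $x^j\delta^{k\ell}$ in one step; you also spell out the implication $L_2=0\Rightarrow L_1=0$, which the paper leaves implicit.
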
 
 \begin{proof}
 Let $F$ be a Landsberg spherically symmetric Finsler metric, then $L_{jk\ell}=0$.  Contracting \eqref{L_ijk} by $x^j\delta^{k\ell}$ and using the facts that $\delta_{ij}y^iy^j=u^2$, $\delta_{ij}x^ix^j=r^2$ and $\delta_{ij}x^iy^j=\langle x,y\rangle=su$, we have
$$(r^2-s^2) L_1+(n+1)L_2=0.$$
Similarly, contracting \eqref{L_ijk} by $x^jx^kx^\ell$, we get
$$(r^2-s^2) L_1+3L_2=0.$$ 
Subtracting the above two equations implies
$$(n-2)L_2=0.$$
Therefore,  $n=2$ or  $L_2=0$ and this completes the proof.
  \end{proof}
  By the above proposition together with the fact that if  $L_1=L_2=0$ then $L_{jk\ell}=0$, one can see that   a Landsbergian spherically symmetric Finsler metric of dimension $n\geq 3$ is characterized by the following two conditions
\begin{equation}
\label{Zhou_L_1_2}
    \begin{split}
        L_1= & 3\phi_sP_{ss}+\phi P_{sss}+(s\phi+(r^2-s^2)\phi_s)Q_{sss}=0,   \\
         L_2=& -s\phi P_{ss}+\phi_s(P-sP_{s})+(s\phi+(r^2-s^2)\phi_s)(Q_{s}-sQ_{ss})=0.
    \end{split}
\end{equation}
  
  Again, to avoid confusions in the class of    the spherically symmetric Finsler metrics of Landsberg type,  we have  the following remark.
  \begin{remark}
  It is clear that if $L_1=L_2=0$ then $L_{jk\ell}=0$ and hence the metric $F$ is Landsbergian. By the above proposition, when $n=2$,  $L_{jk\ell}=0$  does not yield $L_1=L_2=0$ as be discussed in Section 6. For the higher dimensions $n\geq 3$,   $L_{jk\ell}=0$ if and only if $L_1=L_2=0$.  
  \end{remark}

\section{The compatibility conditions}

For a non-regular spherically symmetric metric $F=u\phi(r,s)$,  the case  $\phi-s\phi_s=0$ is excluded. If    $\phi-s\phi_s=0$, then  $\phi=f(r) s$ and hence $F=f(r) us=f(r)\langle x,y\rangle$ which is linear in $y$ for an arbitrary function $f(r)$. If the Finsler function is linear in $y$, then the  metric tensor is degenerate everywhere and this is impossible. Moreover, if $\phi_s=0$, then $F=f(r) u$  which is the Euclidean metric or a conformal transformation of the Euclidean metric and this case also is excluded.  
\begin{proposition}\label{Q_not_defined} 
 Let $F=u\phi(r,s)$ be a spherically symmetric metric, then the geodesic spray of $F$ is not defined  if and only if $\phi-s\phi_s+(r^2-s^2)\phi_{ss}=0$ or equivalently $\phi$ is given by
 $$\phi(r,s)=f_1(r) s+f_2(r) \sqrt{r^2-s^2},$$
 where $f_1(r)$ and $f_2(r)$ are arbitrary functions of $r$.
 \end{proposition}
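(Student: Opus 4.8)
The plan is to prove the equivalence in two stages: first, that the geodesic spray fails to exist precisely when the quantity $\Delta := \phi - s\phi_s + (r^2-s^2)\phi_{ss}$ vanishes, and second, that the equation $\Delta = 0$ integrates to the displayed family. For the first stage I would read off the explicit formulas \eqref{P,Q}: the spray coefficients are $G^i = uPy^i + u^2Qx^i$, the function $Q$ has $\Delta$ as its denominator, and $P$ is built from $Q$ together with a term whose only denominator is $\phi$ (and the cases $\phi = 0$, $\phi_s = 0$, $\phi - s\phi_s = 0$ are already excluded just before the proposition). Hence, wherever $\phi$ is smooth, the coefficients $G^i$ are well defined if $\Delta \neq 0$, while the formula collapses when $\Delta \equiv 0$; so the geodesic spray is not defined exactly when $\Delta = 0$.

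For the second stage I would view $\Delta = 0$, with $r$ held fixed, as a linear second order ODE in $s$ on the interval $|s| < r$, where the leading coefficient $r^2 - s^2$ does not vanish, so the solution space is two dimensional. A direct substitution checks that $\phi = s$ and $\phi = \sqrt{r^2-s^2}$ both solve it, and their Wronskian $-r^2/\sqrt{r^2-s^2}$ is nonzero, so they form a fundamental system. Therefore every solution is $\phi(r,s) = f_1(r)\,s + f_2(r)\sqrt{r^2-s^2}$ for arbitrary functions $f_1, f_2$ of $r$, and conversely each such $\phi$ satisfies $\Delta = 0$; this is precisely the claimed equivalence.

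I do not expect a serious obstacle, but the one point requiring care is the precise meaning of ``the geodesic spray is not defined'': one must check that the vanishing of $\Delta$ is not neutralized by a simultaneous vanishing of the numerator $-\phi_r + s\phi_{rs} + r\phi_{ss}$ of $Q$, which would make $Q$ a removable $0/0$. Substituting a $\phi$ of the claimed form into that numerator gives, after a brief computation, $-r(rf_2' + 2f_2)/\sqrt{r^2-s^2}$; and even in the borderline cases where this too vanishes, the metric tensor is degenerate once $\Delta = 0$ (its determinant carries a factor of $\Delta$, as is visible from \eqref{Regular_condition} and the formulas for $g_{ij}$ and $g^{ij}$), so the Euler-Lagrange equation $i_S dd_J E = -dE$ cannot single out a spray. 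Thus $\Delta = 0$ genuinely corresponds to the geodesic spray being undefined, and the proof is complete.
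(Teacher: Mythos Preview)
Your proof is correct, and the first stage matches the paper exactly. The second stage, however, takes a genuinely different route: the paper exploits the identity $(\phi - s\phi_s)_s = -s\phi_{ss}$ to rewrite $\Delta = 0$ as a first-order separable ODE for the auxiliary quantity $\phi - s\phi_s$, integrates once to obtain $\phi - s\phi_s = c(r)/\sqrt{r^2-s^2}$, and then solves that first-order linear equation. You instead treat $\Delta = 0$ directly as a second-order linear ODE on $|s|<r$, guess two solutions, and verify independence by the Wronskian. Your approach is more transparent and avoids the mild notational slip in the paper (where the constant of the intermediate integration is reused as $f_1$); the paper's approach, on the other hand, does not require guessing the solutions in advance and would generalize more readily if the equation were less tractable. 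Your additional paragraph on the possible $0/0$ cancellation and the degeneracy of $g_{ij}$ when $\Delta=0$ is a worthwhile refinement that the paper does not make explicit.
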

 \begin{proof} 
 By \eqref{P,Q}, it is clear that the function $Q$ is not defined and so the geodesic spray if and only if 
  $\phi-s\phi_s+(r^2-s^2)\phi_{ss}=0$. Since the above formula holds for all  $r$  and $s$, then we can write  
  $$\frac{\phi_{ss}}{\phi-s\phi_s}=-\frac{1}{r^2-s^2}.$$
  Since $(\phi-s\phi_s)_s=-s\phi_{ss}$,    integrating the above equation with respect to $s$ gives
  $$\phi-s\phi_s=\frac{f_1(r)}{\sqrt{r^2-s^2}}.$$
  Which has the solution
  $$\phi(r,s)=f_1(r) s+f_2(r) \sqrt{r^2-s^2}.$$
 \end{proof}

For a given  spherically symmetric Finsler metric $F=u \phi$, then the geodesic spray \eqref{G} is given by    the functions $P$ and $Q$. But if the functions $P$ and $Q$ are given, then to find the Finsler metric whose    geodesic spray is determined  by $P$ and $Q$,  we have to solve the inverse problem or the metrizability problem. So, we investigate the following two compatibility conditions.
\begin{proposition}
Let $P(r,s)$ and $Q(r,s)$ be given, then the Finsler function $F=u\phi(r,s)$ whose geodesic spray given by $P$ and $Q$ is determined by the function $\phi$ provided that $\phi$ satisfies the following   compatibility conditions:
\begin{equation}
\label{Comp_C_C_2}
    \begin{split}
       C_1:= &(1+sP-(r^2-s^2)(2Q-sQ_s))\phi_s+(s P_s-2P-s(2Q-sQ_s))\phi =0,   \\
        C_2:=& \frac{1}{r}\phi_r-(P+Q_s(r^2-s^2))\phi_s-(P_s+sQ_s) \phi =0.
    \end{split}
\end{equation}
\end{proposition}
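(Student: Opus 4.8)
The plan is to read the claim through the metrizability criterion recalled above: the spray $S$ with coefficients $G^i=uPy^i+u^2Qx^i$ is the geodesic spray of $F=u\,\phi(r,s)$ if and only if $F$ solves the system \eqref{metrizable_system}, i.e. $d_hF=0$ and $d_{\mathcal C}F=F$, where $h$ is the horizontal projector of $S$. The homogeneity equation $d_{\mathcal C}F=F$ holds for free: $u=|y|$ is positively $1$-homogeneous in $y$ while $r=|x|$ and $s=\langle x,y\rangle/|y|$ are $0$-homogeneous in $y$, so $F=u\phi(r,s)$ is positively $1$-homogeneous in $y$ and Euler's theorem gives $\mathcal L_{\mathcal C}F=y^k\,\partial F/\partial y^k=F$. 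Thus the whole content of the statement is the first equation, which in the adapted coframe of $S$ reads $\delta F/\delta x^j=\partial F/\partial x^j-G^k_j\,\partial F/\partial y^k=0$ for every $j$.

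So first I would compute the two ingredients from the identities \eqref{Eq:derivatives}:
\begin{equation*}
\frac{\partial F}{\partial x^j}=\frac{u\phi_r}{r}\,\delta_{jk}x^k+\phi_s\,\delta_{jk}y^k,\qquad
\frac{\partial F}{\partial y^k}=\phi_s\,\delta_{kh}x^h+\frac{\phi-s\phi_s}{u}\,\delta_{kh}y^h .
\end{equation*}
The only contractions needed are $y^k\,\partial F/\partial y^k=F=u\phi$ (Euler) and $x^k\,\partial F/\partial y^k=s\phi+(r^2-s^2)\phi_s$. Substituting \eqref{Eq:G^i_j} into $G^k_j\,\partial F/\partial y^k$, the $\delta^k_j$--term of $G^k_j$ just reproduces $uP\,\partial F/\partial y^j$, while the $y^k$-- and $x^k$--terms get contracted against $\partial F/\partial y^k$ via the two identities above; hence every summand regroups into a multiple of $\delta_{jk}x^k$ or of $\delta_{jk}y^k$. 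Collecting the coefficients gives
\begin{align*}
\frac{\delta F}{\delta x^j}={}&u\Big(\tfrac{\phi_r}{r}-\big(P+(r^2-s^2)Q_s\big)\phi_s-\big(P_s+sQ_s\big)\phi\Big)\delta_{jk}x^k\\
&+\Big(\big(1+sP-(r^2-s^2)(2Q-sQ_s)\big)\phi_s+\big(sP_s-2P-s(2Q-sQ_s)\big)\phi\Big)\delta_{jk}y^k .
\end{align*}

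To conclude, note that on the open dense subset $\{|s|<r\}$ of the domain the position vector $x$ and the direction $y$ are linearly independent (here $n\geq 2$), so the covector fields $j\mapsto\delta_{jk}x^k$ and $j\mapsto\delta_{jk}y^k$ are pointwise independent; since the two scalar coefficients above depend only on $(r,s)$, the equation $\delta F/\delta x^j=0$ for all $j$ is equivalent to the vanishing of both coefficients, and by continuity this propagates to the whole domain. The coefficient of $\delta_{jk}y^k$ is precisely $C_1$, and dividing the coefficient of $\delta_{jk}x^k$ by the nonvanishing factor $u$ yields precisely $C_2$; therefore $S$ is the geodesic spray of $u\phi$ if and only if $C_1=C_2=0$, which is the assertion. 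The only real obstacle is the bookkeeping in the contraction $G^k_j\,\partial F/\partial y^k$: one must keep the roles of the free index $j$ and the summed index straight in \eqref{Eq:G^i_j} and verify that, after using the two contraction identities, no term of type $\delta^i_j$ or $y^iy^j$ survives, so that the expression genuinely collapses onto the two tensor types $\delta_{jk}x^k$ and $\delta_{jk}y^k$; once that is done the identification of $C_1$ and $C_2$ is immediate.
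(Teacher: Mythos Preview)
Your argument is correct and follows essentially the same route as the paper: both compute $\delta_jF=\partial_jF-G^i_j\,\partial F/\partial y^i$, use the contractions $y^k\partial_kF=u\phi$ and $x^k\partial_kF=s\phi+(r^2-s^2)\phi_s$, and split the result along the two covectors $\delta_{jk}x^k$ and $\delta_{jk}y^k$ to obtain $C_1$ and $C_2$. The only notable difference is in the converse: you establish the equivalence $d_hF=0\Longleftrightarrow C_1=C_2=0$ in one stroke via the linear independence of $x$ and $y$ on $\{|s|<r\}$, whereas the paper, after proving the forward implication by contracting with $x^j$ and $y^j$, supplies an explicit converse by algebraically manipulating $C_1=C_2=0$ to recover the closed formulae \eqref{P,Q} for $P$ and $Q$ in terms of $\phi$. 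Your route is a bit cleaner; the paper's extra step is logically redundant but serves as a direct check that $C_1,C_2$ really encode \eqref{P,Q}.
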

\begin{proof}Let $P(r,s)$ and $Q(r,s)$ be two given functions. To find the Finsler function $F=u\phi(r,s)$ whose geodesic spray determined by $P$ and $Q$,  we have to solve the system  $d_hF=0$  \eqref{metrizable_system}.  Locally,  the system  $d_hF=0$  gives rise to
\begin{equation}
\label{Eq:sys_compt}
\frac{\partial (u \phi)}{\partial x^j}-G^i_j\frac{\partial (u \phi)}{\partial y^i}=0.
\end{equation}
We have the following properties
$$\frac{\partial  \phi }{\partial x^i}=\frac{\phi_r}{r}\delta_{ik}x^k+\frac{\phi_s}{u}\delta_{ik}y^k, \quad \frac{\partial  \phi }{\partial y^i}= \frac{\phi_s}{u}\delta_{ik}x^k-\frac{s \phi_s}{u^2}\delta_{ik}y^k . $$
Moreover, since $u\phi$ is homogeneous of degree one in $y$, then $y^j\frac{\partial  (u\phi) }{\partial y^j}=u\phi$. Also, we have $x^j\frac{\partial  (u\phi) }{\partial y^j}=s\phi+(r^2-s^2)\phi_s$. Now,  substituting the above formulae into the system \eqref{Eq:sys_compt}, we have
\begin{align*}
\frac{\partial (u \phi)}{\partial x^j}-G^i_j\frac{\partial (u \phi)}{\partial y^i} =& \frac{u\phi_r}{r}\delta_{jk}x^k+\phi_s\delta_{jk}y^k
 -\Big{(} u P \delta^i_j+P_s \delta_{kj}x^ky^i+  \frac{1}{u}(P-sP_s) \ \delta_{kj}y^ky^i\\
 &+u Q_s\delta_{jk}x^ix^k+(2Q-sQ_s)\delta_{jk}x^iy^k \Big{)}\frac{\partial (u \phi)}{\partial y^i}\\
 =&u\left(\frac{1}{r}\phi_r-P\phi_s-\phi P_s-Q_s(s\phi+(r^2-s^2)\phi_s))\right)\delta_{jk}x^k\\
 &+\left( \phi_s-2P\phi+s\phi P_s+sP\phi_s-(2Q-sQ_s)(s\phi+(r^2-s^2)\phi_s) \right)\delta_{jk}y^k\\
 =&0.
\end{align*}
Now, if there exist two functions $\mu$ and $\eta$  such that
$$\mu \delta_{jk} x^k+\eta \delta_{jk} y^k=0.$$ 
Then by contracting the above equation by $x^j$, resp. by $y^j$ we get the following two equations 
$$r^2 \mu +su \eta=0, \quad s\mu+u\eta=0.$$
Then, we have  $(r^2-s^2) \mu=0$ which holds for all $r$ and $s$, that is, $\mu=0$ and hence $\eta=0$. Using this property, we get the conditions $C_1$ and $C_2$. 

Conversely, assume that the conditions $C_1$ and $C_2$ are satisfied. Rewriting $C_1$ and $C_2$ as follows
\begin{equation}
\label{Rewrite_C_1}
\phi_s+s(P_s \phi+P \phi_s)-2P \phi-(s\phi+(r^2-s^2)\phi_s)(2Q-sQ_s)=0,
\end{equation}
\begin{equation}
\label{Rewrite_C_2}
\frac{1}{r}\phi_r-(P\phi_s+P_s\phi)-(s\phi+(r^2-s^2)\phi_s)Q_s =0.
\end{equation}
Adding \eqref{Rewrite_C_1} to the multiplication of \eqref{Rewrite_C_2} by $s$, we have
$$\phi_s+\frac{s}{r}\phi_r-2P\phi -2Q(s\phi+(r^2-s^2)\phi_s)=0.$$
Which implies 
$$P=\frac{Q}{\phi}(s\phi+(r^2-s^2)\phi_s)+\frac{1}{2r\phi}(r\phi_s+s\phi_r).$$
By the above formula of $P$, we get
$$P\phi_s+\phi P_s=\frac{1}{2r}(\phi_r+s\phi_{rs}+r\phi_{ss}+2rQ(s\phi_s-\phi-(r^2-s^2)\phi_{ss}))-Q_s(s\phi+(r^2-s^2)\phi_s).$$
Now,  substituting  $P\phi_s+\phi P_s$ into \eqref{Rewrite_C_2}, we obtain that
$$Q=\frac{1}{2r} \frac{-\phi_r+s\phi_{rs}+r\phi_{ss}}{\phi-s\phi_s+(r^2-s^2)\phi_{ss}}.$$
This completes the proof.
\end{proof}

\section{ Landsberg spherically symmetric metrics }

In this section we focus our attention  to the spherically symmetric metrics of Landsberg type. We start by the following result.
\begin{theorem}\label{Theorem_A}
A Landsberg spherically symmetric Finsler metric of dimension $n\geq 3$ is either Riemannian or the geodesic  spray is determined  by the functions 
 \begin{equation}
\label{Zhou_P&Q}
P=c_1 s+\frac{c_2}{r^2}\sqrt{r^2-s^2}  , \quad Q=\frac{1}{2}c_0 s^2-\frac{c_2 s}{r^4}\sqrt{r^2-s^2}+c_3,
\end{equation}
where $c_0$, $c_1$, $c_2$, $c_3$ are arbitrary functions of $r$.  
\end{theorem}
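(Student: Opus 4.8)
The plan is to reduce the Landsberg condition to a system of ordinary differential equations in $s$ (with $r$ a parameter), to eliminate the unknown profile $\phi$ from it by means of the compatibility conditions, and then to integrate.

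Since $n\geq 3$, Proposition~\ref{Landsberg_cond} shows that the Landsberg condition is equivalent to the pair $L_1=L_2=0$ of \eqref{Zhou_L_1_2}, while metrizability of the geodesic spray forces the profile $\phi$ of $F=u\phi$ to satisfy the compatibility conditions $C_1=C_2=0$ of \eqref{Comp_C_C_2}. As recorded at the start of Section~4 we may assume $\phi_s\not\equiv 0$ (otherwise $F$ is conformal to the Euclidean metric, hence Riemannian, and we are in the first alternative of the statement) and $\phi-s\phi_s\not\equiv 0$; and, the spray being defined, Proposition~\ref{Q_not_defined} gives $\phi-s\phi_s+(r^2-s^2)\phi_{ss}\not\equiv 0$. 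These non-degeneracies in turn force $s\phi+(r^2-s^2)\phi_s\not\equiv 0$, since $s\phi+(r^2-s^2)\phi_s\equiv 0$ integrates to $\phi=g(r)\sqrt{r^2-s^2}$, for which $\phi-s\phi_s+(r^2-s^2)\phi_{ss}\equiv 0$.

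Next I would eliminate $\phi$. Each of $C_1=0$, $L_1=0$, $L_2=0$ is linear and homogeneous in $(\phi_s,\phi)$, say $\alpha_i\phi_s+\beta_i\phi=0$, $i=1,2,3$, where the coefficients $\alpha_i,\beta_i$ are explicit polynomial expressions in $P$, $Q$ and their $s$-derivatives (for instance $\alpha_1=1+sP-(r^2-s^2)(2Q-sQ_s)$, $\alpha_2=3P_{ss}+(r^2-s^2)Q_{sss}$, $\alpha_3=P-sP_s+(r^2-s^2)(Q_s-sQ_{ss})$). Since $\phi\neq 0$, solving $C_1=0$ yields $(\log\phi)_s=-\beta_1/\alpha_1$, and substituting this into $L_1=0$ and $L_2=0$ turns them into two $\phi$-free equations, $\alpha_1\beta_2-\alpha_2\beta_1=0$ and $\alpha_1\beta_3-\alpha_3\beta_1=0$, on $P$ and $Q$ alone; the exceptional locus $\alpha_1\equiv 0$ (which by $C_1=0$ also makes $\beta_1\equiv 0$, i.e. two further relations between $P$ and $Q$) is dealt with as a separate, easier case. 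A third $\phi$-free equation comes from solving $C_2=0$ for $(\log\phi)_r$ and imposing $\bigl((\log\phi)_s\bigr)_r=\bigl((\log\phi)_r\bigr)_s$.

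It remains to solve this overdetermined system for $(P,Q)$, which is the heart of the matter. I would split according to whether $P_{ss}\equiv 0$ in $s$. If $P_{ss}\equiv 0$, then $P$ is affine in $s$; using $s\phi+(r^2-s^2)\phi_s\neq 0$, the equation $L_1=0$ forces $Q_{sss}=0$, so $Q$ is quadratic in $s$, and then $L_2=0$ together with $C_1=0$ and the non-vanishing of $\phi-s\phi_s$ and $s\phi+(r^2-s^2)\phi_s$ kills the constant term of $P$ and the linear term of $Q$, leaving $P=c_1 s$, $Q=\tfrac12 c_0 s^2+c_3$; this is the $c_2=0$ case of \eqref{Zhou_P&Q}, for which the spray is quadratic in $y$, so the metric is Berwald (and, as Theorem~\ref{Th_Riemannian} will show, in fact Riemannian — which is why the statement records this branch as the first alternative). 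If $P_{ss}\not\equiv 0$, then, after cancelling the factors kept non-zero by the non-degeneracies, the two proportionality relations collapse to the linear second-order ODE $(r^2-s^2)P_{ss}-sP_s+P=0$, whose solution space in $s$ is spanned by $s$ and $\sqrt{r^2-s^2}$; hence $P=c_1 s+\tfrac{c_2}{r^2}\sqrt{r^2-s^2}$ for $r$-functions $c_1,c_2$, and back-substitution into $L_1=0$ then pins $Q=\tfrac12 c_0 s^2-\tfrac{c_2 s}{r^4}\sqrt{r^2-s^2}+c_3$, the remaining $s$-constants of integration being the $r$-functions $c_0,c_3$. (One may finally check, using $C_2$ and the integrability condition, that sprays of the form \eqref{Zhou_P&Q} are actually realized by an admissible, possibly non-regular, $\phi$, as the examples in the paper confirm.) I expect the main obstacle to be the passage from the cumbersome proportionality identities to the clean ODE $(r^2-s^2)P_{ss}-sP_s+P=0$: it needs the right linear combination of $C_1$, $L_1$, $L_2$ and a careful elimination of spurious sub-branches via the non-degeneracy inequalities, after which the integration and the bookkeeping of the four $r$-functions $c_0,\dots,c_3$ are routine.
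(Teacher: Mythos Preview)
Your overall plan is reasonable, but the route through the compatibility condition $C_1$ is a detour, and the step you identify as the main obstacle is exactly where the paper's argument is cleanest. The paper does not use $C_1$ or $C_2$ for this theorem at all; it works purely from $L_1=L_2=0$ and eliminates $Q$ rather than $\phi$. From $L_2=0$ one solves for $Q_s-sQ_{ss}$ as a rational expression in $\phi,\phi_s,P,P_s,P_{ss}$; differentiating once in $s$ and using $(Q_s-sQ_{ss})_s=-sQ_{sss}$ gives $Q_{sss}$; substituting that into $L_1=0$ and clearing the denominator $(s\phi+(r^2-s^2)\phi_s)^2$ yields a single identity that \emph{factors}:
\[
\bigl((r^2-s^2)P_{ss}+(P-sP_s)\bigr)\,\bigl(s\phi_s^2+s\phi\phi_{ss}-\phi\phi_s\bigr)=0.
\]
So there are no proportionality relations to untangle. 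The first factor is already your $P$-ODE; the second, when it vanishes, integrates (via $-s\phi_{ss}/(\phi-s\phi_s)=-\phi_s/\phi$) to $\phi=\sqrt{f_1(r)+f_2(r)s^2}$, which is the Riemannian alternative. Your case split on $P_{ss}\equiv 0$ versus $P_{ss}\not\equiv 0$ is therefore not the natural dichotomy: the Riemannian branch is a condition on $\phi$ involving $\phi_{ss}$, which never appears in your algebraic determinants, so it cannot emerge from them.

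Once the $P$-factor vanishes, the integration goes essentially as you say. One small correction: with $P$ in hand it is $L_2=0$, not $L_1=0$, that determines $Q$; substituting $P=c_1 s+\tfrac{c_2}{r^2}\sqrt{r^2-s^2}$ into $L_2=0$ gives $Q_s-sQ_{ss}=-c_2(r^2-s^2)^{-3/2}$, which integrates to the stated $Q$. The degenerate subcase $P-sP_s=0$ (hence $P_{ss}=0$ from the $P$-ODE) then gives $Q_s-sQ_{ss}=0$ from $L_2=0$, recovering the $c_2=0$ instance of \eqref{Zhou_P&Q}.
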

\begin{proof}
Suppose  that $F=u\phi(r,s)$ be a spherically symmetric metric of dimension $n\geq 3$, then  by Proposition \ref{Landsberg_cond}, the function $\phi$ and  the functions $P$ and $Q$ of the geodesic spray satisfy the Landsberg conditions  \eqref{Zhou_L_1_2}, that is,  we have $L_1=L_2=0$. The condition $L_2=0$ implies that
$$Q_s-sQ_{ss}=-\frac{-s\phi P_{ss}+\phi_s(P-sP_{s})}{s\phi+(r^2-s^2)\phi_s}.$$
It should be noted  that if $s\phi+(r^2-s^2)\phi_s=0$, then by taking the derivative with respect to $s$ implies $\phi-s\phi_s+(r^2-s^2)\phi_{ss}=0$ which is impossible by Proposition \ref{Q_not_defined}. Now, making use of the property that $(Q_s-sQ_{ss})_s=-sQ_{sss}$, then differentiating the above equation with respect to $s$, we get  $Q_{sss}$. Substituting  $Q_{sss}$ into the Landsberg condition $L_1=0$ and straightforward simplifications, we have
$$((r^2-s^2)P_{ss}+(P-sP_s))(s\phi_s^2+s\phi\phi_{ss}-\phi\phi_{s})=0.$$
If $s\phi_s^2+s\phi\phi_{ss}-\phi\phi_{s}=0$, then we can write
$$\frac{-s\phi_{ss}}{\phi -s\phi_s} =-\frac{\phi_s}{\phi}.$$
By integration with respect to $s$, we have 
$$\phi -s\phi_s=\frac{f_1(r)}{\phi}.$$
Using the substitution $\phi=s \psi$, then we can rewrite the above equation as follows
$$\psi\psi_s=-\frac{f_1(r)}{s^3}.$$
Which implies
$$\phi=\sqrt{f_1(r)+f_2(r)s^2}.$$
That is, the Finsler function  given by $F=u\phi=\sqrt{f_1(r) u^2+f_2(r)\langle x, y\rangle^2}$  is Riemannian.

If $(r^2-s^2)P_{ss}+(P-sP_s)=0$, then by assuming that $P-sP_s\neq 0$ we can write
$$\frac{-sP_{ss}}{P-sP_s}=\frac{s}{r^2-s^2}.$$
Since $(P-sP_s)_s=-sP_{ss}$, then integrating the above equation with respect to $s$ gives rise to
$$P-sP_s=\frac{ c_1(r)}{\sqrt{r^2-s^2}}.$$
Which has the solution 
$$P=c_2(r)s+\frac{c_1(r)}{r^2}\sqrt{r^2-s^2}.$$
Taking the fact that $s\phi+(r^2-s^2)\phi_s\neq 0$, then substituting by the function $P$ into the Landsberg condition $L_2=0$, implies that
$$Q_s-sQ_{ss}=-\frac{c_1(r)}{(r^2-s^2)^{3/2}}.$$
Using the substitution $s\psi=Q_s$, then we can write 
$$s^2\psi_{s}=\frac{c_2(r)}{(r^2-s^2)^{3/2}}.$$
Which yields the solution
$$Q_s=c_0(r) s-\frac{c_2(r) (r^2-2s^2)}{r^4\sqrt{r^2-s^2}}.$$
Which has the solution 
$$Q=\frac{1}{2}c_0 (r) s^2-\frac{c_2 (r)s}{r^4}\sqrt{r^2-s^2}+c_3(r).$$

In the case where   $P-sP_s=0$ then we have $(r^2-s^2)P_{ss}=0$ for all $r$ and $s$ and hence we obtain that $P_{ss}=0$ and also  the Landsberg condition $L_2=0$, implies that $Q_s-sQ_{ss}=0$ and hence $Q_{sss}=0$. Therefore,  $P=f_1(r)s $ and $Q=f_2(r)s^2+f_3(r)$ which are a special case of the given formulae for $P$ and $Q$. Hence the proof is completed.
 \end{proof}
Based on the above proof, we provide the following remark.
\begin{remark}
It should be noted that  if $P-sP_s=0$ and $Q_s-sQ_{ss}=0$, then $P_{ss}=0$ and $Q_{sss}=0$. That is the Berwald curvature vanishes and hence the metric is Berwaldian. Moreover, if $n\geq 3$ and the Berwald curvature vanishes, then we get $P-sP_s=0$ and $Q_s-sQ_{ss}=0$.  That is, as it was mentioned in \cite{Guo-Mo,Zhou_Mo}, the Berwaldain spherically symmetric Finsler metrics are characterized by $P-sP_s=0$ and $Q_s-sQ_{ss}=0$ provided that $n\geq 3$.

\end{remark}

\begin{theorem}\label{main_theorem}
There exist non-Berwaldian Landsberg spherically symmetric Finsler metrics of dimension $n\geq 3$. 
\end{theorem}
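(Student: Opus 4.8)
The plan is to produce the metrics explicitly by exploiting the classification in Theorem \ref{Theorem_A}. By that theorem, a Landsberg spherically symmetric metric with $n\ge 3$ that is not Riemannian has a geodesic spray governed by $P=c_1 s+\frac{c_2}{r^2}\sqrt{r^2-s^2}$ and $Q=\frac12 c_0 s^2-\frac{c_2 s}{r^4}\sqrt{r^2-s^2}+c_3$. The strategy is: (i) plug this $P$ (or a specialization of it) into the compatibility conditions $C_1,C_2$ of Proposition \eqref{Comp_C_C_2}, which are now linear first-order PDEs for the unknown $\phi(r,s)$; (ii) solve them, picking the free functions $c_0,c_1,c_2,c_3$ so that a solution $\phi$ actually exists and is consistent between the two equations; (iii) verify that the resulting $\phi$ is genuinely non-Riemannian and that its geodesic spray is not quadratic in $y$, i.e.\ that $P_{ss}\ne 0$ or $Q_{sss}\ne 0$, so the metric is Landsbergian but not Berwaldian; and (iv) check regularity/non-regularity — here we expect the metrics to be non-regular, i.e.\ to violate one of the conditions in \eqref{Regular_condition} on some locus of directions, which is exactly why they evade the unicorn obstruction.

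Concretely, I would first treat $C_1=0$ as an ODE in $s$ (with $r$ a parameter): it has the form $a(r,s)\phi_s+b(r,s)\phi=0$, so $\phi=\exp\!\big(-\int b/a\, ds\big)$ up to a factor depending on $r$ only. The key is to choose $c_0,c_1,c_2,c_3$ (most naturally $c_0$ linked to the others, and perhaps taking $c_1$, $c_3$ to vanish or be constants and scaling $r$ out) so that the integrand $-b/a$ is a rational function of $s$ with denominator a nice polynomial in $s$ and $r^2-s^2$; the partial-fraction decomposition of that integrand is what produces the $\operatorname{arctanh}$ terms and the fractional powers $(5s^2-r^2)^{1/3}$, $(2s^2-r^2)^{1/6}$, $(r^4-5r^2s^2+5s^4)^{1/4}$ seen in the statement of the introduction. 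Then I would impose $C_2=0$: this fixes the remaining $r$-dependence (the overall factor $f(r)$, and forces relations among the $c_i(r)$), and it is here that one must verify the two PDEs are compatible — i.e.\ that $\partial_r$ of the $C_1$-solution matches what $C_2$ demands. Cross-differentiating $C_1$ and $C_2$ and reducing modulo the Landsberg relations should yield exactly the constraints on the $c_i$ that make the system integrable.

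The main obstacle is step (ii)–(iii): arranging the free functions so that $C_1$ and $C_2$ are simultaneously solvable \emph{and} the solution is non-Berwaldian. Generic choices will either be inconsistent between $C_1$ and $C_2$, or will collapse to the Riemannian branch ($s\phi_s^2+s\phi\phi_{ss}-\phi\phi_s=0$) or to $P_{ss}=0=Q_{sss}$. One has to navigate to the thin slice of parameter space where a truly new solution survives; this is essentially an exercise in choosing the roots of the auxiliary polynomial appearing in the denominator (the cubic $5s^2-r^2$ times the quadratic $2s^2-r^2$ in the first example, and the quartic $r^4-5r^2s^2+5s^4$ in the second) so that the resulting $\phi$ is smooth and positive on an open cone of directions but degenerate on its boundary. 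I would present the argument by simply exhibiting the two choices of $(c_0,c_1,c_2,c_3)$, writing down the corresponding $\phi$ (the two displayed formulas), and then verifying directly from \eqref{Zhou_L_1_2} that $L_1=L_2=0$ while the Berwald curvature components $G^i_{jk\ell}$ — equivalently $P_{ss}$ and $Q_{sss}$ — do not all vanish, so the metric is a genuine non-Berwaldian Landsberg metric; the details are deferred to Examples \ref{Ex:1} and \ref{Ex:2}.
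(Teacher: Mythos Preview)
Your proposal is essentially the paper's own argument: substitute the Landsberg forms of $P,Q$ from Theorem~\ref{Theorem_A} into the compatibility conditions $C_1,C_2$, read off $\phi_s/\phi$ and $\phi_r/\phi$, and then impose the integrability condition $(\phi_s/\phi)_r=(\phi_r/\phi)_s$ (your ``cross-differentiation'') to constrain the $c_i$. The only refinement the paper adds is that this integrability condition reduces cleanly to two equations $A(r)=0$, $B(r)=0$ in $r$ alone, which are solved \emph{explicitly} for $c_0$ and $c_2$ in terms of arbitrary $c_1(r),c_3(r)$ and a constant $c$; so rather than a ``thin slice'' to be navigated, one obtains a two-free-function family of solutions, and Examples~\ref{Ex:1} and~\ref{Ex:2} simply pick $c_1,c_3$ to be convenient powers of $r$.
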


\begin{proof}
Let $F=u\phi$ be  a non-Riemannian Landsberg spherically symmetric Finsler metric  of dimension $n\geq 3$, then by Theorem \ref{Theorem_A} the functions  $P$ and $Q$ are given by
\begin{equation*}
P=c_1 s+\frac{c_2}{r^2}\sqrt{r^2-s^2}  , \quad Q=\frac{1}{2}c_0 s^2-\frac{c_2 s}{r^4}\sqrt{r^2-s^2}+c_3.
\end{equation*}
Now,    the functions $P$, $Q$ and $\phi$  must satisfy the compatipility conditons \eqref{Comp_C_C_2}.  Since $\phi$ is non-zero then dividing the conditions $C_1=0$ and $C_2=0$ in  \eqref{Comp_C_C_2} by $\phi$ and substituting by $P$ and $Q$,  straightforward calculations imply  the following two equations:
$$\frac{\phi_s}{\phi}(r^2+(c_1+2c_3)r^2s^2-2c_3r^4+2c_2s\sqrt{r^2-s^2})-(c_1+2c_3)sr^2 -2c_2 \sqrt{r^2-s^2}=0,$$
$$\frac{\phi_s}{\phi}( -c_0r^4s(r^2-s^2)-c_1sr^4-2c_2s^2\sqrt{r^2-s^2})+r^3\frac{\phi_r}{\phi}-c_0s^2r^4-c_1r^4+2c_2s\sqrt{r^2-s^2}=0.$$
Solving the above equations algebraically for $\frac{\phi_s}{\phi}$ and $\frac{\phi_r}{\phi}$, we obtain that
\begin{equation}
\label{Zhou_phi}
    \begin{split}
        \frac{\phi_s}{\phi}= & \frac{(c_1+2c_3)r^2s+2c_2\sqrt{r^2-s^2}}{r^2+(c_1+2c_3)r^2s^2-2c_3r^4+2c_2s\sqrt{r^2-s^2}},   \\
         \frac{\phi_r}{\phi}=&  \frac{(2c_0c_2r^4+4(c_1+c_3)c_2r^2-2c_2)s\sqrt{r^2-s^2}}{r(r^2+(c_1+2c_3)r^2s^2-2c_3r^4+2c_2s\sqrt{r^2-s^2})}   \\
         &+  \frac{c_0c_1r^6s^2+(c_0+4c_1c_3+2c_1^2)r^4s^2-2c_1c_3r^6+c_1r^4}{r(r^2+(c_1+2c_3)r^2s^2-2c_3r^4+2c_2s\sqrt{r^2-s^2})} .
    \end{split}
\end{equation}
It should be noted that the above two formulas are obtained   in \cite[Eq. (4.9)]{Zhou_Mo} by a long and  completely different way.

 Now, $\phi$ must satisfy    the  condition $\phi_{rs}=\phi_{sr}$ so we have to find what conditions on the functions  $c_0$, $c_1$, $c_2$ and $c_3$ so that     $\phi_{rs}=\phi_{sr}$. For this purpose we have
$$\left(\frac{\phi_s}{\phi}\right)_r-\left(\frac{\phi_r}{\phi}\right)_s=\frac{\phi( \phi_{sr}-\phi_{rs})}{\phi^2}.$$
 That is, $\phi_{rs}=\phi_{sr}$ if and only if $\left(\frac{\phi_s}{\phi}\right)_r=\left(\frac{\phi_r}{\phi}\right)_s$. Applying the condition $\left(\frac{\phi_s}{\phi}\right)_r=\left(\frac{\phi_r}{\phi}\right)_s$ on \eqref{Zhou_phi}, we get the following 
  \begin{align*}
 & \frac{r^2}{\sqrt{r^2-s^2}(r^2+(c_1+2c_3)r^2s^2-2c_3r^4+2c_2s\sqrt{r^2-s^2})^2}
 \Big{(} s\sqrt{r^2-s^2}(  c_1'r^2+2c_3'r^2+8c_3^2r^3\\
 &-2c_0r^3-2c_1^2r^3+4c_0c_1c_3r^7+4c_0c_3r^5+4c_1^2c_3r^5+8c_1c_3^2r^5-2c_0c_1r^5+2c_1c_3'r^4\\
 &-2c_1'c_3r^4)+( 4c_0c_2c_3r^7+4c_1c_2c_3r^5-8c_2c_3^2s^2r^3+2c_0c_2s^2r^3+2c_1c_2s^2r-4c_2c_3's^2r^2\\
 &-4c_2c_3s^2r+4c_2'c_3s^2r^2 +2c_2'r^2-2c_2's^2-4c_0c_2c_3s^2r^5-4c_1c_2c_3s^2r^3+8c_2c_3^2r^5-2c_0c_2r^5\\
 &-2c_1c_2r^3+4c_2c_3'r^3-4c_2'c_3r^4+4c_2c_3r^3 )\Big{)}=0,
\end{align*}
where  $c_1'$ (resp. $c_2'$, $c_3'$ ) is the  derivative  of $c_1$ (resp. $c_2$, $c_3$) with respect to $r$. 
Straightforward simplifications, we have 
\begin{align}
  \label{Eq:Lands_com_1} 
 s \sqrt{r^2-s^2} A(r)+ (r^2-s^2) B(r)=0,
\end{align}
where
 \begin{align*}
  A(r):=& r^2(4c_0c_1c_3r^5+ 2(2c_0c_3+2c_1^2c_3+4c_1c_3^2-c_0c_1)r^3+2(c_1c_3'-c_1'c_3)r^2\\
  &+2(4c_3^2 -c_0-c_1^2)r+2c_3'+c_1')\\
  B(r):=&4c_0c_2c_3r^5+2c_2(2c_1c_3+4c_3^2-c_0)r^3+4(c_2c_3'-c_2'c_3)r^2+2c_2(2c_3-c_1)r+2c_2'.
\end{align*}
 Since the   equation \eqref{Eq:Lands_com_1} is satisfied for all  $s$, then we must have
$$A(r)=0, \quad  B(r)=0.$$
Without loss of generality, we can assume that $c_1$ and $c_3$ are arbitrary functions in $r$ and solve $A(r)=0$ algebraically for $c_0$ we have
\begin{equation}
\label{Eq:c_0}
c_0=-\frac{4c_1c_3(2c_3+c_1)r^3-2(c_1'c_3-c_1c_3')r^2+2(4c_3^2-c_1^2)r+c_1'+2c_3'}{2r(c_1r^2+1)(2c_3r^2-1)}.
\end{equation}
By substituting  $c_0$ into $B(r)=0$ and solving it for $c_2$ we obtain that
\begin{equation}
\label{Eq:c_2}
c_2=c \sqrt{(c_1r^2+1)(2c_3r^2-1)},
\end{equation}
where $c$ is an arbitrary real constant. So if $c_1=-\frac{1}{r^2}$ or $c_3=\frac{1}{2r^2}$, then $c_2=0$ and this implies that the spray coefficients are quadratic and hence the space is Berwaldian. Moreover,  according to Proposition \ref{Regular_condition}, we have to exclude the case where  $\phi$ satisfies  the equation    
  $$ \phi-s\phi_s+(r^2-s^2)\phi_{ss}=0.$$
 Moreover, dividing the above equation by $\phi$, we have
\begin{equation}
\label{Q_defined}
1- \frac{s\phi_s}{\phi}+(r^2-s^2)\frac{\phi_{ss}}{\phi}=0.
\end{equation} 
 By using the property that $\left( \frac{\phi_s}{\phi} \right)_s=\frac{\phi_{ss}}{\phi}-\left( \frac{\phi_s}{\phi} \right)^2 $ and substituting from \eqref{Zhou_phi} into \eqref{Q_defined}, we get that
\begin{equation}
\label{Regularity_2}
-\frac{r^4(c_1r^2+1)(2c_3r^2-1)}{(r^2+(c_1+2c_3)r^2s^2-2c_3r^4+2c_2s\sqrt{r^2-s^2})^2}=0.
\end{equation}  
That is, the choices $c_1=-\frac{1}{r^2}$ or $c_3=\frac{1}{2r^2}$ must be excluded since it implies the non-existence of the function $Q$. Also, the case where $c_1+2c_3=0$ implies that $c_2=c\sqrt{-(2c_3r^2-1)^2}$ which is a contradiction because $c_2$ is a real-valued  function of $r$.  That is, we have the conditions
  \begin{equation}
\label{Eq:c_1_c3}
c_1\neq -\frac{1}{r^2}, \quad c_3\neq \frac{1}{2r^2}, \quad c_1+2c_3\neq 0.
\end{equation}
Taking the equations \eqref{Eq:c_0}, \eqref{Eq:c_2} and \eqref{Eq:c_1_c3} into account, one can find explicit examples of  non-Berwaldian Landsberg metrics of dimension $n\geq 3$. This can be seen by the Examples \ref{Ex:1} and \ref{Ex:2}. 
\end{proof}

\begin{example}\label{Ex:1}
To find an explicit example of a non-Berwaldian Landsberg spherically symmetric metric, consider the functions $c_1$ and $c_3$ are given by
$$c_1=\frac{1}{r^2}, \quad c_3=\frac{1}{r^2}.$$
Then the functions $c_0$  and $c_2$ are calculated by the help of \eqref{Eq:c_0} and \eqref{Eq:c_2} as follows
$$c_0=-\frac{ 3}{ r^4}, \quad c_2=\sqrt{2} c=\frac{1}{2},$$
where we choose the constant $c=\frac{1}{2\sqrt{2}}$.
By substituting the functions $c_0$, $c_1$, $c_2$ and $c_3$ into \eqref{Zhou_phi}, we have
\begin{equation*}
            \frac{\phi_s}{\phi}=  \frac{ 3s+\sqrt{r^2-s^2}}{-r^2+3s^2+ s\sqrt{r^2-s^2}},  \quad \,\, 
         \frac{\phi_r}{\phi}= -\frac{ r}{-r^2+3s^2+ s\sqrt{r^2-s^2}} .
    \end{equation*}
Solving the above equations for $\phi$, we obtain that
\small{\begin{equation}\label{phi_1}
\begin{split}
&\phi(r,s)=(5s^2-r^2)^{\frac{1}{3}}(2s^2-r^2)^{\frac{1}{6}}\exp\Bigg{(} \frac{1}{3}\operatorname{arctanh} \Bigg{(} \frac{\sqrt{5}}{10 }\frac{(\sqrt{5}s+5r)}{\sqrt{r^2-s^2}}\Bigg{)}\\ 
&-\frac{1}{6}\operatorname{arctanh}\Bigg{(}  \frac{\sqrt{2}}{2  }\frac{(\sqrt{2}s+2r)}{\sqrt{r^2-s^2}}\Bigg{)}
+ \frac{1}{3}\operatorname{arctanh} \Bigg{(} \frac{\sqrt{5}}{2  }\frac{(\sqrt{5}s-5r)}{\sqrt{r^2-s^2}}\Bigg{)}-\frac{1}{6}\operatorname{arctanh}\Bigg{(}  \frac{\sqrt{2}}{  2}\frac{(\sqrt{2}s-2r)}{\sqrt{r^2-s^2}}\Bigg{)}\Bigg{)}
.
\end{split}
\end{equation}}
Therefore, the metric $F=u \phi(r,s)$ is a non-Berwladian Landsberg metric with the geodesic spray obtained by the the functions $P$ and $Q$ \eqref{P,Q} and  $\phi$ is given by \eqref{phi_1}. 
\end{example}
Another example can be obtained as the following.
\begin{example}\label{Ex:2}
Let the functions $c_1$ and $c_3$ be given by
$$c_1=0, \quad c_3=\frac{1}{r^2}.$$
Then the functions $c_0$  and $c_2$ are calculated the help of \eqref{Eq:c_0} and \eqref{Eq:c_2} as follows
$$c_0=-\frac{ 2}{ r^4}, \quad c_2=  c=\frac{1}{2},$$
where we choose the constant $c=\frac{1}{2 }$.
By substituting the functions $c_0$, $c_1$, $c_2$ and $c_3$ into \eqref{Zhou_phi}, we have
\begin{equation}
\label{Exmp_2}
            \frac{\phi_s}{\phi}=  \frac{ 2s+\sqrt{r^2-s^2}}{-r^2+2s^2+ s\sqrt{r^2-s^2}},  \quad \,\, 
         \frac{\phi_r}{\phi}= \frac{ -2s^2-s\sqrt{r^2-s^2}}{r(-r^2+2s^2+ s\sqrt{r^2-s^2})} .
    \end{equation}
Solving the first  equation  for $\phi$ by integrating with respect to $s$, we have
\small{$$\phi(r,s)=a(r) (r^4-5r^2s^2+5s^4)^{\frac{1}{4}} \Big{(} \frac{A}{B}\Big{)}^{\frac{5-\sqrt{5}}{20}}\Big{(} \frac{C}{D}\Big{)}^{\frac{5+\sqrt{5}}{20}}\exp\Bigg{(} -\frac{\sqrt{5}}{10}\operatorname{arctanh} \Bigg{(}  \frac{\sqrt{5}(r^2-2s^2)}{r^2}\Bigg{)}\Bigg{)},$$}
where 
\begin{align*}
A:&=\left(2\sqrt{r^2-s^2}-  2r+\left(\sqrt{5}   -1\right)s\right)\sqrt{r^2-s^2}-\left(\sqrt{5} -1 \right)rs,\\
B:&= \left(2\sqrt{r^2-s^2}-2r-\left(\sqrt{5}   -1\right)s\right)\sqrt{r^2-s^2}+\left(\sqrt{5} -1  \right)rs,\\
C:&=\left(2\sqrt{r^2-s^2}-2r-\left(\sqrt{5}   +1\right)s\right)\sqrt{r^2-s^2}+\left(\sqrt{5} +1  \right)rs,\\
D:&=\left(2\sqrt{r^2-s^2}-2r+\left(\sqrt{5}   +1\right)s\right)\sqrt{r^2-s^2}-\left(\sqrt{5} +1  \right)rs.\\
\end{align*}
The function $a(r)$ that satisfies the second equation of \eqref{Exmp_2} is $a(r)=\frac{1}{r}$. That is, the function $\phi$ is given by
 \small{\begin{equation}\label{phi_2}
\phi(r,s)=\frac{1}{r} (r^4-5r^2s^2+5s^4)^{\frac{1}{4}} \Big{(} \frac{A}{B}\Big{)}^{\frac{5-\sqrt{5}}{20}}\Big{(} \frac{C}{D}\Big{)}^{\frac{5+\sqrt{5}}{20}}\exp\Bigg{(} -\frac{\sqrt{5}}{10}\operatorname{arctanh} \Bigg{(}  \frac{\sqrt{5}(r^2-2s^2)}{r^2}\Bigg{)}\Bigg{)}.
\end{equation}}
Therefore, the metric $F=u \phi(r,s)$ is a non-Berwladian Landsberg metric with the geodesic spray obtained by the   functions $P$ and $Q$  and  $\phi$ is given by \eqref{phi_2}.
\end{example}

We end this section by the following two theorems.
\begin{theorem}\label{Th_Riemannian}
All   Berwaldian spherically symmetric metrics of dimension $n\geq3$ are Riemannian. 
\end{theorem}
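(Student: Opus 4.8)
The plan is to combine the description of Berwald spherically symmetric sprays with the first compatibility condition $C_1=0$ from \eqref{Comp_C_C_2}. Since $F=u\phi$ is Berwaldian of dimension $n\geq 3$, its Berwald curvature vanishes, and by the Remark following Theorem \ref{Theorem_A} this forces $P-sP_s=0$ and $Q_s-sQ_{ss}=0$. Both are elementary first-order linear equations in $s$: from $\partial_s(P/s)=-(P-sP_s)/s^2=0$ one gets $P=c_1(r)s$, and writing $T=Q_s$ the equation $T-sT_s=0$ gives $Q_s=c_0(r)s$, hence $Q=\tfrac12 c_0(r)s^2+c_3(r)$ for some functions $c_0,c_1,c_3$ of $r$. (Equivalently, one may first apply Theorem \ref{Theorem_A}: as every Berwald metric is Landsbergian, $F$ is either Riemannian — in which case there is nothing to prove — or $P,Q$ are given by \eqref{Zhou_P&Q}, and then $P-sP_s=c_2/\sqrt{r^2-s^2}=0$ forces $c_2=0$, bringing us to the same $P,Q$.)

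Next I would substitute these $P$ and $Q$ into the condition $C_1=0$ from \eqref{Comp_C_C_2}, which $\phi$ must satisfy since it is the metrizing function for the spray $(P,Q)$. Using $2Q-sQ_s=2c_3$, $sP=c_1s^2$ and $sP_s-2P=-c_1 s$, the condition collapses to
$$\bigl(1-2c_3 r^2+(c_1+2c_3)s^2\bigr)\phi_s-(c_1+2c_3)s\,\phi=0,$$
that is, $\dfrac{\phi_s}{\phi}=\dfrac{(c_1+2c_3)s}{1-2c_3 r^2+(c_1+2c_3)s^2}$. The crucial observation is that the right-hand side equals $\tfrac12\,\partial_s\log\bigl(1-2c_3 r^2+(c_1+2c_3)s^2\bigr)$, so integrating with respect to $s$ gives
$$\phi(r,s)=g(r)\sqrt{(1-2c_3 r^2)+(c_1+2c_3)s^2}$$
for a positive function $g(r)$; if $c_1+2c_3\equiv 0$ then already $\phi_s=0$ and $\phi=g(r)$, which is of the same form and is among the trivial cases excluded in Section 4. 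Hence
$$F^2=u^2\phi^2=g(r)^2(1-2c_3 r^2)\,|y|^2+g(r)^2(c_1+2c_3)\,\langle x,y\rangle^2$$
is a quadratic form in $y$ whose coefficients depend only on $r=|x|$, so $F$ is Riemannian; the remaining condition $C_2=0$ merely fixes $g$ in terms of $c_1,c_3$ and does not alter this.

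I do not expect a genuine obstacle: the argument is one substitution followed by one integration. The only points requiring a little care are (i) recognizing that the $\phi_s$-coefficient produced by $C_1=0$ is exactly the quantity that turns the $s$-equation into an exact logarithmic derivative, (ii) noting that the denominator $1-2c_3 r^2+(c_1+2c_3)s^2$ cannot vanish on the domain where $\phi$ is defined, which is automatic since $\phi>0$ and $\phi_s$ is finite there, and (iii) keeping the degenerate branch $c_1+2c_3\equiv 0$ in view so that the conclusion is exhaustive. Alternatively one could read $\phi$ off directly from \eqref{Zhou_phi} specialized to $c_2=0$ rather than re-deriving it from $C_1=0$; the two routes give the same $\phi$.
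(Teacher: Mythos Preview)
Your argument is correct and follows the same overall scheme as the paper: first reduce to $P=c_1(r)s$, $Q=\tfrac12 c_0(r)s^2+c_3(r)$, then integrate the resulting ODE $\phi_s/\phi=(c_1+2c_3)s/\bigl(1-2c_3r^2+(c_1+2c_3)s^2\bigr)$ (this is exactly \eqref{Zhou_phi} with $c_2=0$) to obtain the square-root form of $\phi$ and hence a Riemannian $F$.

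The one genuine difference is in how you kill the non-quadratic part of the spray. The paper first invokes Theorem~\ref{Theorem_A} to write $P,Q$ in the Landsberg form \eqref{Zhou_P&Q} and then contracts the mean Berwald curvature $E_{ij}=0$ with $\delta^{ij}$ to obtain $n(n-2)c_2\sqrt{r^2-s^2}=0$, hence $c_2=0$. You instead use the Remark after Theorem~\ref{Theorem_A} (the Berwald characterization $P-sP_s=0$, $Q_s-sQ_{ss}=0$ for $n\ge3$) to get the quadratic $P,Q$ directly, or equivalently read off $c_2=0$ from $P-sP_s=c_2/\sqrt{r^2-s^2}$. Your route is shorter and avoids the $E_{ij}$ computation; the paper's route has the mild advantage of being self-contained without appealing to the Remark, but both are valid and lead to the identical integration step.
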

\begin{proof}
Let $F$ be a Berwald spherically symmetric of  dimension $n\geq3$.  Since every Berwald metric is Landsbergian, then the geodesic spray  of $F$ is given by the functions $P$ and $Q$ \eqref{Zhou_P&Q};
$$P=c_1 s+\frac{c_2}{r^2}\sqrt{r^2-s^2}  , \quad Q=\frac{1}{2}c_0 s^2-\frac{c_2 s}{r^4}\sqrt{r^2-s^2}+c_3.$$
 Moreover, since $F$ is Berwaldian then the mean curvature $E_{ij}$ \eqref{Mean_curv.} vanishes. Substituting by 
 $$P-sP_s=\frac{c_2}{\sqrt{r^2-s^2}}, \quad Q_s-sQ_{ss}=-\frac{c_2}{(r^2-s^2)^{3/2}},$$
  $$ P_{ss}=-\frac{c_2}{(r^2-s^2)^{3/2}}, \quad  Q_{sss}=\frac{3c_2}{(r^2-s^2)^{5/2}}$$
   into the equation $E_{ij}=0$ implies
$$\frac{n c_2}{u\sqrt{r^2-s^2}}\left(  \delta_{ij}-\frac{r^2}{u^2(r^2-s^2)} \delta_{ih} \delta_{jk}y^iy^j -\frac{1}{r^2-s^2} \delta_{ih} \delta_{jk}x^ix^j +\frac{s}{u(r^2-s^2)}(\delta_{ih} \delta_{jk}+\delta_{ik} \delta_{jh})x^hy^k\right)=0.$$
Contracting the above equation by $\delta^{ij}$ and using the facts that $\delta_{ij}y^iy^j=u^2$, $\delta_{ij}x^ix^j=r^2$ and $\delta_{ij}x^iy^j=\langle x,y\rangle$, we have
$$ n (n-2)c_2 \sqrt{r^2-s^2}=0. $$
From which together with the facts that $n\geq3$ and the equation holds for all $r$ and $s$, we must have $c_2=0$. That is, we have
$$P=c_1 s  , \quad Q=\frac{1}{2}c_0 s^2+c_3.$$
Now, the equation \eqref{Zhou_phi} becomes
\begin{equation}
\label{Zhou_phi_2}
    \begin{split}
        \frac{\phi_s}{\phi}= & \frac{(c_1+2c_3)s }{1+(c_1+2c_3)s^2-2c_3r^2 },   \\
         \frac{\phi_r}{\phi}=&   \frac{c_0c_1r^6s^2+(c_0+4c_1c_3+2c_1^2)r^4s^2-2c_1c_3r^6+c_1r^4}{r(r^2+(c_1+2c_3)r^2s^2-2c_3r^4)} ,
    \end{split}
\end{equation}
where $c_0$ is given by \eqref{Eq:c_0}. Integrating $ \frac{\phi_s}{\phi}$ with respect to $s$ implies 
$$\phi=a(r) \sqrt{(c_1+2c_3) s^2-2c_3 r^2+1},$$
where $a(r)$ is to be chosen such that both formulae of  \eqref{Zhou_phi_2} are satisfied, that is, calculating $\frac{\phi_r}{\phi}$ and equaling it with the second formula of \eqref{Zhou_phi_2} we obtain the formula of $a(r)$ as follows
$$a(r)=\exp{\int \frac{r(2c_1c_3 r^2-c_3'r-c_1-2c_3)}{2c_3 r^2 -1} dr}.$$ 
 Consequently, the metric $F=u\phi= a(r) u  \sqrt{(c_1+2c_3) s^2-2c_3 r^2+1} $ is Riemannian.
\end{proof}

\begin{theorem}\label{Reg_Lands_Riem_D_3}
All regular Landsberg spherically symmetric metrics of dimension $n\geq 3$ are Riemannian.
\end{theorem}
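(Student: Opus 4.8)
The plan is to argue by contradiction, feeding the structural classification of Theorems \ref{Theorem_A} and \ref{main_theorem} into the regularity conditions \eqref{Regular_condition}. Suppose $F=u\phi(r,s)$ is a regular Landsberg spherically symmetric metric of dimension $n\geq 3$ which is \emph{not} Riemannian. By Theorem \ref{Theorem_A} its geodesic spray is then determined by the functions $P,Q$ in \eqref{Zhou_P&Q}, so the computation in the proof of Theorem \ref{main_theorem} applies without change: dividing the compatibility conditions \eqref{Comp_C_C_2} by $\phi$ and solving algebraically shows that $\phi$ obeys \eqref{Zhou_phi}, and imposing $\phi_{rs}=\phi_{sr}$ forces $c_0$ to be \eqref{Eq:c_0} and $c_2$ to be \eqref{Eq:c_2}, i.e. $c_2=c\sqrt{(c_1r^2+1)(2c_3r^2-1)}$ for some real constant $c$. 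By the exclusions \eqref{Eq:c_1_c3} we may also assume $2c_3r^2\neq 1$, so the common denominator $D(r,s):=r^2+(c_1+2c_3)r^2s^2-2c_3r^4+2c_2s\sqrt{r^2-s^2}$ occurring in \eqref{Zhou_phi} satisfies $D(r,0)=r^2(1-2c_3r^2)\neq 0$.

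Next I would exploit regularity at the single value $s=0$. Substituting \eqref{Zhou_phi} into $1-\tfrac{s\phi_s}{\phi}+(r^2-s^2)\tfrac{\phi_{ss}}{\phi}$ — precisely the substitution performed in the derivation of \eqref{Regularity_2} — yields the identity
$$\frac{\phi-s\phi_s+(r^2-s^2)\phi_{ss}}{\phi}=-\frac{r^4(c_1r^2+1)(2c_3r^2-1)}{D(r,s)^2},$$
and evaluating at $s=0$ (legitimate since $D(r,0)\neq 0$) gives $\dfrac{\phi+r^2\phi_{ss}}{\phi}\Big|_{s=0}=\dfrac{c_1r^2+1}{1-2c_3r^2}$. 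Since $F$ is regular and $n\geq 3$, condition \eqref{Regular_condition} requires $\phi>0$ and $\phi-s\phi_s+(r^2-s^2)\phi_{ss}>0$ throughout $|s|\leq r<r_0$; in particular $(c_1r^2+1)(1-2c_3r^2)>0$ for every $r<r_0$. Comparing with \eqref{Eq:c_2}, which reads $c_2^{\,2}=-c^2(c_1r^2+1)(1-2c_3r^2)$, this forces $c_2^{\,2}\leq 0$, hence $c_2\equiv 0$.

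With $c_2\equiv 0$ the spray data reduce to $P=c_1s$ and $Q=\tfrac12 c_0s^2+c_3$, which are quadratic in $y$, so $F$ is a Berwald metric; by Theorem \ref{Th_Riemannian} it must then be Riemannian, contradicting our assumption. Hence every regular Landsberg spherically symmetric metric of dimension $n\geq 3$ is Riemannian. I do not foresee a genuine obstacle here: the whole argument is a reuse of the identities \eqref{Zhou_phi}, \eqref{Eq:c_2} and \eqref{Regularity_2} together with one sign comparison, and the only delicate point — that the evaluation $s=0$ be admissible — is secured by the exclusions \eqref{Eq:c_1_c3}. (If one preferred not to invoke Theorem \ref{Th_Riemannian} for the case $c_2\equiv 0$, integrating \eqref{Zhou_phi} directly as in its proof exhibits $\phi=a(r)\sqrt{(c_1+2c_3)s^2-2c_3r^2+1}$, which is visibly Riemannian.)
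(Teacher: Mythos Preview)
Your proof is correct and follows essentially the same route as the paper: both arguments combine the identity \eqref{Regularity_2} and the formula \eqref{Eq:c_2} with the regularity condition \eqref{Regular_condition} to produce a sign incompatibility forcing $c_2=0$, and then invoke Theorem \ref{Th_Riemannian}. The only cosmetic difference is that you specialize \eqref{Regularity_2} to $s=0$ and argue ``regularity $\Rightarrow (c_1r^2+1)(2c_3r^2-1)<0 \Rightarrow c_2^2\leq 0$'', whereas the paper argues in the opposite direction ``$c_2$ real $\Rightarrow (c_1r^2+1)(2c_3r^2-1)>0 \Rightarrow$ regularity fails''; the logical content is the same.
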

\begin{proof}
Let $F=u\phi$ be a regular Landsberg spherically symmetric metric of dimension $n\geq 3$. Now, by the regularity condition \eqref{Regular_condition} together with the fact that $\phi$ is positive, then  we can write
$$1-s\frac{\phi_s}{\phi}+(r^2-s^2)\frac{\phi_{ss}}{\phi}>0.$$
Moreover, by \eqref{Regularity_2} we get
$$1-s\frac{\phi_s}{\phi}+(r^2-s^2)\frac{\phi_{ss}}{\phi}=-\frac{r^4(c_1r^2+1)(2c_3r^2-1)}{(r^2+(c_1+2c_3)r^2s^2-2c_3r^4+2c_2s\sqrt{r^2-s^2})^2}.$$
By \eqref{Eq:c_2}, we must have  $(c_1r^2+1)(2c_3r^2-1)>0$ and hence we have
$$\phi-s\phi_s+(r^2-s^2)\phi_{ss}<0$$
which is a contradiction and hence the metric is Berwaldian. Thus the result follows by Theorem \ref{Th_Riemannian}.
\end{proof}


  \section{The two-dimensional case} 
  
  This section is devoted to the two-dimensional spherically symmetric Finsler metrics. We characterize all spherically symmetric surfaces of Berwald  or Landsberg  types and then we determine  all      Berwald surfaces. We end this section by a note on a class of surfaces mentioned in \cite{Zhou}.
  
  We start by the following proposition. 
  
 \begin{proposition}\label{2D_E_E_ij=0}
 For a spherically symmetric surface the mean Berwald curvature $E_{ij}$ and the scalar trace $E$ of the mean Berwald curvature  vanish if and only if $sH-(r^2-s^2)H_s=0$.
 \end{proposition}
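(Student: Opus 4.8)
The plan is to reduce the whole statement to one scalar identity, using the compact form \eqref{E_H} of $E_{ij}$ together with the fact, already recorded in the proof of Proposition \ref{Scalar_E}, that $y^iE_{ij}=0$.

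The first observation is that in dimension two this fact forces $E_{ij}$ to have rank at most one: for each $(x,y)\in\TM$ the vector $y$ is nonzero and lies in the kernel of the symmetric bilinear form $E_{ij}(x,y)$ on the two-dimensional fibre, so $E_{ij}(x,y)=\lambda(x,y)\,\delta_{hi}\delta_{kj}n^hn^k$ for some scalar $\lambda$ and some $n$ spanning the $\delta$-orthogonal complement of $y$. Consequently $E_{ij}=0$ \emph{if and only if} its $\delta$-trace vanishes, because $\delta^{ij}E_{ij}=\lambda\,|n|^2$ with $|n|\neq0$. I would then compute that trace from \eqref{E_H} with $n=2$: contracting by $\delta^{ij}$ and using $\delta_{ij}\delta^{ij}=2$, $\delta_{ij}y^iy^j=u^2$, $\delta_{ij}x^ix^j=r^2$, $\delta_{ij}x^iy^j=su$ — the same bookkeeping as in the proposition right after \eqref{E_H} — yields $\delta^{ij}E_{ij}=\frac{1}{su}\bigl(sH-(r^2-s^2)H_s\bigr)$. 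Hence $E_{ij}=0$, and therefore also its scalar trace $E=g^{ij}E_{ij}$, if and only if $sH-(r^2-s^2)H_s=0$. As an independent check of the ``if'' direction for $E$ one may substitute $H_s=\tfrac{sH}{r^2-s^2}$ into the formula of Proposition \ref{Scalar_E} with $n=2$, which collapses it to $E=\frac{1}{su}\bigl(\rho_0+\rho_3(r^2-s^2)\bigr)\bigl(sH-(r^2-s^2)H_s\bigr)=0$.

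For readers who prefer to avoid the rank argument, I would instead prove the ``if'' direction purely by substitution: inserting $sH_s+H=\tfrac{r^2H}{r^2-s^2}$ and $\tfrac{H_s}{s}=\tfrac{H}{r^2-s^2}$ into \eqref{E_H} rewrites it as $E_{ij}=\frac{H}{u(r^2-s^2)}\bigl((r^2-s^2)\delta_{ij}-\frac{r^2}{u^2}\delta_{hi}\delta_{kj}y^hy^k+\frac{s}{u}(\delta_{hi}\delta_{kj}+\delta_{ki}\delta_{hj})x^hy^k-\delta_{hi}\delta_{kj}x^hx^k\bigr)$, and the parenthesized tensor is exactly $(r^2-s^2)$ times the expansion of $\delta_{ij}$ in the basis $\{x,y\}$ of the plane (one checks it is killed by $y^j$ and is $\delta$-trace-free when $n=2$), hence it vanishes; the remaining locus $r^2=s^2$ is then handled by continuity.

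The computations are routine; the one real point is the first observation — that in dimension two the single scalar relation $sH-(r^2-s^2)H_s=0$ already determines the whole tensor $E_{ij}$. This is precisely the feature that breaks down for $n\geq3$, where the rank of $E_{ij}$ need not drop and, as in the proposition after \eqref{E_H}, one is pushed instead to $H=0$.
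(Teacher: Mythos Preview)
Your argument is correct and somewhat cleaner than the paper's. Both prove the forward direction by contracting \eqref{E_H} with $\delta^{ij}$, but for the converse the paper computes the components $E_{11},E_{12},E_{22}$ individually via the two-dimensional identity $r^2-s^2=(x_1y_2-x_2y_1)^2/u^2$, whereas your rank-one observation (a symmetric $2\times2$ form with $y$ in its kernel vanishes iff its $\delta$-trace does) packages both directions into a single trace computation. Two small remarks. First, your ``alternative'' paragraph is not really an independent route: showing the bracketed tensor is killed by $y^j$ and $\delta$-trace-free is precisely the rank argument again, and the phrase about ``the expansion of $\delta_{ij}$ in the basis $\{x,y\}$'' does not parse as written. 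Second, for the scalar $E$, Proposition~\ref{Scalar_E} with $n=2$ already factors \emph{unconditionally} as $E=\tfrac{1}{su}\bigl(\rho_0+\rho_3(r^2-s^2)\bigr)\bigl(sH-(r^2-s^2)H_s\bigr)$; the paper reads the statement as two separate biconditionals and uses this factorisation together with $\rho_0+\rho_3(r^2-s^2)=\bigl(\phi(\phi-s\phi_s+(r^2-s^2)\phi_{ss})\bigr)^{-1}\neq0$ to obtain the standalone equivalence $E=0\Leftrightarrow sH-(r^2-s^2)H_s=0$. You have this formula already---just drop the unnecessary substitution $H_s=sH/(r^2-s^2)$ and observe that it gives both directions for $E$.
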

 \begin{proof}
 Assume that $E_{ij}$ vanishes then  contracting \eqref{E_H} by $\delta^{ij}$ together with the fact that $n=2$ imply
 $$ sH-(r^2-s^2)H_s=0.$$
 Also, let the scalar $E$ vanish then by    Proposition \ref{Scalar_E}, we have
$$(\rho_0+\rho_3(r^2-s^2))(sH-(r^2-s^2)H_s)=0.$$
By substituting by  the formulae of $\rho_0$ and $\rho_3$ given in \eqref{Eq:g^ij}, we obtain that
$$\frac{sH-(r^2-s^2)H_s}{\phi(\phi-s\phi_s+(r^2-s^2) \phi_{ss})}=0.$$
Therefore,      $sH-(r^2-s^2)H_s=0$.

Conversely, assume that $ sH-(r^2-s^2)H_s=0$, then by Proposition \ref{Scalar_E} it is clear that $E=0$. Now, taking the fact that $r^2-s^2=\frac{(x_1y_2-x_2y_1)^2}{u^2}$ into account, we can conclude that the components $E_{11}$, $E_{12}$ and $E_{22}$ vanish. For example, we have
$$E_{11}=\frac{y_2^2}{su^3}(s u^2 H-(x_1y_2-x_2y_1)^2 H_s)=\frac{y_2^2}{su }(s  H-(r^2-s^2) H_s)=0.$$ 
Similarly, we can calculate $E_{12}$ and $E_{22}$. Consequently, the proof is completed.
 \end{proof}

 \begin{proposition}\label{2D_L_ijk=0}
A spherically symmetric surface is Landsbergian  if and only if $ (r^2-s^2)L_1+3L_2=0$.
 \end{proposition}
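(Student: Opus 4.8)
The plan is to prove the two implications separately: necessity is a one-line contraction already carried out (in every dimension) inside the proof of Proposition~\ref{Landsberg_cond}, and sufficiency is obtained by exactly the same two-dimensional device, $r^{2}-s^{2}=(x_{1}y_{2}-x_{2}y_{1})^{2}/u^{2}$, used in the converse part of Proposition~\ref{2D_E_E_ij=0}.

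For necessity, suppose the surface is Landsbergian, i.e. $L_{jk\ell}=0$. Contracting \eqref{L_ijk} by $x^{j}x^{k}x^{\ell}$ and using $\delta_{ij}x^{i}x^{j}=r^{2}$, $\delta_{ij}x^{i}y^{j}=su$, $\delta_{ij}y^{i}y^{j}=u^{2}$ --- precisely the computation appearing in the proof of Proposition~\ref{Landsberg_cond} --- yields $(r^{2}-s^{2})L_{1}+3L_{2}=0$. Note that contracting instead by $x^{j}\delta^{k\ell}$ produces $(r^{2}-s^{2})L_{1}+(n+1)L_{2}=0$, which for $n=2$ is the \emph{same} equation; this is exactly why in dimension two the single scalar condition already encodes the full Landsberg equation, and is the analogue of the phenomenon described in Proposition~\ref{2D_E_E_ij=0} for the mean Berwald curvature.

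For sufficiency, assume $(r^{2}-s^{2})L_{1}+3L_{2}=0$; I must show that each of $L_{111},L_{112},L_{122},L_{222}$ vanishes. Writing $W:=x_{1}y_{2}-x_{2}y_{1}$, one has $W^{2}=u^{2}(r^{2}-s^{2})$ and the identity $x^{i}-\tfrac{s}{u}y^{i}=\tfrac{W}{u^{2}}\,\omega^{i}$, where $\omega=(y_{2},-y_{1})$ is the rotation of $y$. Substituting $n=2$ into \eqref{L_ijk} and expanding a component, the bracket multiplying $-\phi/2$ collects into $L_{1}$ times the cube of $x^{i}-\tfrac{s}{u}y^{i}$ plus $3L_{2}$ times a degree-one factor of it; for instance one finds the bracket in $L_{222}$ equal to $L_{1}\bigl(x_{2}-\tfrac{s}{u}y_{2}\bigr)^{3}+\tfrac{3L_{2}y_{1}^{2}}{u^{2}}\bigl(x_{2}-\tfrac{s}{u}y_{2}\bigr)$, which after substituting the $W$-expressions and using $W^{2}/u^{2}=r^{2}-s^{2}$ becomes a multiple of $(r^{2}-s^{2})L_{1}+3L_{2}$. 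The uniform outcome is the closed form
\[
L_{jk\ell}=-\frac{\phi\,W}{2u^{4}}\bigl((r^{2}-s^{2})L_{1}+3L_{2}\bigr)\,\omega_{j}\omega_{k}\omega_{\ell},
\]
whence the hypothesis forces $L_{jk\ell}=0$. A shortcut avoids recomputing all four components: since $G^{h}_{jk}$ is $0$-homogeneous in $y$ one has $y^{j}L_{jk\ell}=0$, and a totally symmetric cubic form on a $2$-plane annihilated by $y$ must be a scalar multiple of $\omega_{j}\omega_{k}\omega_{\ell}$; then it suffices to evaluate one component, say $L_{222}$, to pin down that scalar as $-\tfrac{\phi W}{2u^{4}}\bigl((r^{2}-s^{2})L_{1}+3L_{2}\bigr)$, reducing the verification to a single line.

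The only genuinely computational point is the expansion of the fully symmetrized tensor \eqref{L_ijk} in coordinates in the sufficiency direction and recognizing that it factors through $(r^{2}-s^{2})L_{1}+3L_{2}$; first recording $y^{j}L_{jk\ell}=0$ and the consequent $\omega_{j}\omega_{k}\omega_{\ell}$ structure is what makes this painless, and it is the route I would take in the write-up.
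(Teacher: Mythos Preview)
Your proof is correct and follows essentially the same approach as the paper: necessity by a single contraction of \eqref{L_ijk} (the paper contracts by $x^{j}\delta^{k\ell}$, you by $x^{j}x^{k}x^{\ell}$, but as you note these coincide when $n=2$), and sufficiency by showing each component factors through $(r^{2}-s^{2})L_{1}+3L_{2}$ via the identity $r^{2}-s^{2}=(x_{1}y_{2}-x_{2}y_{1})^{2}/u^{2}$.

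Your treatment of sufficiency is a genuine streamlining of the paper's argument. The paper simply expands $L_{111}$ from \eqref{L_ijk}, collects terms, and arrives at the factorization $L_{111}=\tfrac{y_{2}^{3}\phi}{2u^{4}}(x_{1}y_{2}-x_{2}y_{1})\bigl((r^{2}-s^{2})L_{1}+3L_{2}\bigr)$ (up to sign), then says ``similarly'' for the remaining components. Your observation that $y^{j}L_{jk\ell}=0$ forces the totally symmetric cubic $L_{jk\ell}$ on a $2$-plane to be a scalar multiple of $\omega_{j}\omega_{k}\omega_{\ell}$, together with the closed form $x^{i}-\tfrac{s}{u}y^{i}=\tfrac{W}{u^{2}}\omega^{i}$, explains \emph{why} the factorization occurs and reduces the check to a single component. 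This buys conceptual clarity and avoids the four separate computations the paper gestures at; the paper's approach buys nothing in return beyond directness.
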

 \begin{proof}
 Assume that the surface is Landsbergian, then     $L_{ijk}=0$. Contracting \eqref{L_ijk} by $x^j\delta^{k\ell}$, we have
  $$(r^2-s^2) L_1+3L_2=0.$$

Conversely, assume that $(r^2-s^2) L_1+3L_2=0$,  one can see that the components $L_{ijk}=0$. For example, we have
$$L_{111}=\frac{\phi}{2u^3}(-x_1^3u^3+3 x_1^2 y_1 s u^2-3x_1 y_1^2 s^2 u+s^3y_1^3)L_1+3(y_1 su^2-x_1 u^3-sy_1^3+x_1y_1^2 u)L_2.$$
Using the formulae of $u$, $r$ and $s$ when $n=2$, one can see that 
$$L_{111}=\frac{y_2^3\phi}{2u^4}(x_1y_2 -x_2 y_1)((r^2-s^2)L_1+3 L_2)=0.$$
Similarly, we  can calculate $L_{112}$, $L_{122}$ and $L_{222}$.  As required.
 \end{proof}
  
  Rewriting  the Landsberg condition for spherically symmetric surfaces and  straightforward calculations imply that  
 \begin{equation}
\label{Main_Lands_surface_cond}
\begin{split}
 (r^2-s^2)L_1+3L_2 =& \lambda_1  \phi_s +\lambda_2 \phi, \\
 \end{split}
\end{equation} where
$$\lambda_1:= \frac{1}{s}(sH-(r^2-s^2)H_s), \quad \lambda_2:=(r^2-s^2)K_s-3sK,\quad K:=P_{ss}-Q_s+sQ_{ss}.$$
  Now, we announce our   first result in this section.     
  \begin{theorem} 
  \label{First_surface_result}
A spherically symmetric Finsler surface is Berwaldian if and only if 
$$P=b_1 s+ \frac{b_2}{ \sqrt{r^2-s^2}} +\frac{b_3 (r^2-2s^2)}{\sqrt{r^2-s^2}} ,$$
$$ Q= b_0 s^2+\frac{1}{2} b_1+ \frac{b_2 s (r^2-2 s^2)}{r^4 \sqrt{r^2-s^2}} -\frac{b_3 s (3r^2-2s^2)}{r^2\sqrt{r^2-s^2}}-\frac{a  }{r^2} s \sqrt{r^2-s^2},$$
where $a$, $b_0$, $b_1$, $b_2$, $b_3$ are arbitrary functions of $r$ and to be chosen such  that the  compatibility conditions are satisfied.
\end{theorem}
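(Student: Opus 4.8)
The plan is to work directly from the Berwald characterization for spherically symmetric surfaces: in dimension $n=2$ the metric is Berwaldian precisely when the Berwald curvature $G^i_{jk\ell}$ vanishes, which by the explicit formula for $G^i_{jk\ell}$ reduces to a system of ODEs in $s$ for $P$ and $Q$. Contracting $G^i_{jk\ell}=0$ against suitable combinations of $\delta$, $x$, $y$ (exactly as was done for $E_{ij}$ and for $L_{jk\ell}$ in Propositions preceding this statement) one extracts a minimal independent set of scalar equations. I expect these to be equivalent to the two relations
\begin{equation*}
(r^2-s^2)P_{ss}+(P-sP_s)=\text{(something involving }b_3\text{)},\qquad P_{ss}-Q_s+sQ_{ss}=0,
\end{equation*}
together with one further compatibility relation tying $Q$ to $P$. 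Concretely, the vanishing of the coefficients of the $P_{sss}$- and $Q_{sss}$-type tensor pieces forces $P_{sss}$ and $Q_{sss}$ to be determined by lower derivatives, and the remaining pieces then collapse to first/second-order linear ODEs in $s$.

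The key steps, in order: (1) write out $G^i_{jk\ell}=0$ and group the independent tensorial components, reducing to scalar ODEs; (2) solve the $P$-equation. The homogeneous part $(r^2-s^2)P_{ss}+(P-sP_s)=0$ was already integrated in the proof of Theorem \ref{Theorem_A}: using $(P-sP_s)_s=-sP_{ss}$ one gets $P-sP_s=\text{const}(r)/\sqrt{r^2-s^2}$, hence a particular family $P\sim s$ and $P\sim \sqrt{r^2-s^2}$; the inhomogeneous term (coming from the curvature pieces not present in the $n\geq 3$ analysis, because in $2$D the condition $L_{jk\ell}=0$ does not force $L_1=L_2=0$) contributes the extra solution $P\sim (r^2-2s^2)/\sqrt{r^2-s^2}$. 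This yields
\begin{equation*}
P=b_1 s+\frac{b_2}{\sqrt{r^2-s^2}}+\frac{b_3(r^2-2s^2)}{\sqrt{r^2-s^2}}
\end{equation*}
with $b_1,b_2,b_3$ arbitrary functions of $r$. (3) Substitute this $P$ into the $Q$-equation $P_{ss}=Q_s-sQ_{ss}$ (equivalently $s\psi_s=\text{known}$ after the substitution $Q_s=s\psi$, as in the proof of Theorem \ref{Theorem_A}), and integrate twice in $s$; the two constants of integration in $s$ are arbitrary functions of $r$, which after renaming become $b_0$ and the $\tfrac12 b_1$ term, while a further integration constant produces the $-\tfrac{a}{r^2}s\sqrt{r^2-s^2}$ term. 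Collecting terms gives exactly the stated formula for $Q$. (4) Conversely, check by direct substitution that any $P,Q$ of the stated form makes $G^i_{jk\ell}=0$; this is the routine direction. Finally append the remark that for such $(P,Q)$ to actually be the geodesic spray of a genuine Finsler $\phi$ one still needs the compatibility conditions $C_1=C_2=0$ of Proposition \ref{Comp_C_C_2} to be solvable, which is why $a,b_0,b_1,b_2,b_3$ are "to be chosen."

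The main obstacle I anticipate is step (1)–(2): correctly bookkeeping the tensorial contractions of the long Berwald-curvature formula to be sure one has captured \emph{all} independent scalar consequences (and no spurious ones), and then identifying the inhomogeneous term whose presence is exactly the $2$D phenomenon distinguishing this from the $n\geq 3$ case treated in Theorem \ref{Theorem_A}. Once the ODE system is pinned down, the integrations are elementary (they are the same manipulations $(P-sP_s)_s=-sP_{ss}$, $(Q_s-sQ_{ss})_s=-sQ_{sss}$ used earlier), and the converse is a mechanical verification.
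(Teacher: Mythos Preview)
Your outline has a genuine gap: the two scalar ODEs you guess are both incorrect, and with them you would not reach the stated $P$. The $P$-equation that actually arises is
\[
(r^2-s^2)P_{ss}+3(P-sP_s)=\frac{a_1(r)}{\sqrt{r^2-s^2}},
\]
with coefficient $3$, not $1$. You can check directly that $s$ and $(r^2-2s^2)/\sqrt{r^2-s^2}$ are the two independent homogeneous solutions of this equation, while $1/\sqrt{r^2-s^2}$ is a particular solution accounting for the free $a_1(r)$; this is exactly the three-parameter family $b_1,b_2,b_3$ in the statement. With your coefficient $1$ the homogeneous solutions are $s$ and $\sqrt{r^2-s^2}$ (those of Theorem~\ref{Theorem_A}), and the $b_3$ term never appears. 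Similarly, $K:=P_{ss}-Q_s+sQ_{ss}$ does \emph{not} vanish in the two-dimensional Berwald case; the correct condition is $(r^2-s^2)K_s-3sK=0$, giving $K=c(r)(r^2-s^2)^{-3/2}$, and this nonzero $K$ is precisely what produces the extra $-\tfrac{a}{r^2}s\sqrt{r^2-s^2}$ term in $Q$ that you were hoping to get from an integration constant.

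The paper avoids contracting the full $G^i_{jk\ell}$ formula altogether. It uses instead two simpler consequences of Berwaldianness that have already been reduced to single scalar equations in $2$D: $E_{ij}=0\Leftrightarrow sH-(r^2-s^2)H_s=0$ (Proposition~\ref{2D_E_E_ij=0}) and the Landsberg condition $L_{jk\ell}=0\Leftrightarrow (r^2-s^2)L_1+3L_2=0$ (Proposition~\ref{2D_L_ijk=0}). The bridge is the algebraic identity \eqref{Main_Lands_surface_cond},
\[
(r^2-s^2)L_1+3L_2=\lambda_1\phi_s+\lambda_2\phi,\qquad \lambda_1=\tfrac{1}{s}\bigl(sH-(r^2-s^2)H_s\bigr),\quad \lambda_2=(r^2-s^2)K_s-3sK,
\]
so the first condition kills $\lambda_1$, and then $\phi\neq 0$ forces $\lambda_2=0$. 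Combining the resulting explicit forms of $H$ and $K$ yields the coefficient-$3$ ODE for $P$ above, and $Q$ follows by quadrature. For the converse the paper does not verify $G^i_{jk\ell}=0$ componentwise either: it observes that the stated $P,Q$ make the surface simultaneously weakly Berwald ($E_{ij}=0$) and Landsberg, and then invokes the known fact that a weakly Berwald Landsberg surface is Berwald.
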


\begin{proof}
Let $F$ be a Berwald spherically symmetric Finsler surface, then the mean   curvature $E_{ij}$ and the scalar trace $E$   vanish. Then, by Proposition \ref{2D_E_E_ij=0}, we have  $sH-(r^2-s^2)H_s=0$ which has the solution
\begin{equation}
\label{Eq:Berwald_H_1}
H=\frac{a_0(r)}{\sqrt{r^2-s^2}}.
\end{equation}
 Moreover, since every Berwald metric is Landsbergian, then $F$ is Landsbergian. Now,   by Proposition   \ref{2D_L_ijk=0}, we have
$(r^2-s^2)L_1+3L_2=0$ and since $sH-(r^2-s^2)H_s=0$, then   \eqref{Main_Lands_surface_cond} implies
   $$\lambda_2=(r^2-s^2)K_s-3sK=0.$$
  That is, we have 
  $$\frac{K_s}{K}=\frac{P_{sss}+sQ_{sss}}{P_{ss}-Q_s+sQ_{ss}}=\frac{3s}{r^2-s^2}.$$ 
  Then,  we have the solution
  \begin{equation}
\label{Eq:Berwald_H_2}
K=P_{ss}-Q_s+sQ_{ss}=\frac{b_0(r)}{(r^2-s^2)^{3/2}}.
\end{equation}
One can see that $P_{ss}-Q_s+sQ_{ss}=P_{ss}+(sQ_s)_s-2Q_{s}$, then  integrating the above equation with respect to $s$, we have
 \begin{equation}
\label{Eq:Berwald_H_3}
P_s+sQ_s-2Q=\frac{b_0(r) s}{r^2\sqrt{r^2-s^2}}+b(r).
\end{equation}
Now, by using the definition of $H$ together with  the equations \eqref{Eq:Berwald_H_1} and \eqref{Eq:Berwald_H_2}, we can write
$$(r^2-s^2)P_{ss}-3s P_s+3P=\frac{a_1(r)}{\sqrt{r^2-s^2}}.$$
The   solution of the above equation can be written in the form
\begin{equation}
\label{P_berwald_surface}
P=b_1 s+ \frac{b_2}{ \sqrt{r^2-s^2}} +\frac{b_3 (r^2-2s^2)}{\sqrt{r^2-s^2}} ,
\end{equation}
where $b_1$, $b_2$, $b_3$ are arbitrary functions of $r$. To find the function $Q$, we rewrite   \eqref{Eq:Berwald_H_3} as follows
$$s^3\left( \frac{Q}{s^2}\right)_s= -P_s+\frac{b_0(r) s}{r^2\sqrt{r^2-s^2}}.$$
Hence, by integration  with respect to $s$ we can write the function $Q$ as follows
\begin{equation}
\label{Q_berwald_surface}
Q=b_0 s^2+\frac{1}{2} b_1+ \frac{b_2 s (r^2-2 s^2)}{r^4 \sqrt{r^2-s^2}} -\frac{b_3 s (3r^2-2s^2)}{r^2\sqrt{r^2-s^2}}-\frac{a  }{r^2} s \sqrt{r^2-s^2},
\end{equation}
where $b_0$, $a$ are arbitrary functions of $r$.
 
 Conversely, if $P$ and $Q$ are given by the formulae \eqref{P_berwald_surface} and \eqref{Q_berwald_surface}, then straightforward calculations imply that the Berwald curvature vanishes. Or instead, one can see that  $(r^2-s^2)L_1+3L_2=0$ and $sH-(r^2-s^2)H_s=0$ which    means  that the surface is Landsberian and weekly Berwald ($E_{ij}=0$). But it is known that a Landsberian weekly Berwald surface is Berwaldian and hence the proof is completed.
\end{proof}

     It should be noted that, in \cite{Zhou},   L. Zhou   provided a class of Landsberg spherically symmetric surfaces and he claimed that this class is  not Berwaldian. In fact, this class is Berwaldian as it is shown in  \cite{Elgendi-Youssef}. Also, this can be seen easily since the corresponding  functions $P$ and $Q$   are a special case of  Theorem \ref{First_surface_result}. Moreover, we are going to discuss  another point of Zhou's class related to the compatibility conditions.

 In what follows, to make the comparison easier and  to avoid confusions, we use the terminology and notations of \cite{Zhou}. In the class of metrics  mentioned in  \cite[Theorem 3.3]{Zhou} the  functions $\phi$,  $P$ and $Q$ are not compatible, i.e., the compatibility conditions are not satisfied. We will provide an example showing this non-compatibility soon. The functions $P$ and $Q$ of the class provided in   \cite[Theorem 3.3]{Zhou} are given by   
  \begin{equation}
  \label{P_Q_2D}
  P=-\frac{s}{r^2}+\frac{c}{r^2}\sqrt{r^2-s^2}, \quad Q=c_0-\frac{4r^4c_0^2+2r^3c_0'+c^2}{2r^4(2r^2c_0-1)}s^2-\frac{s}{r^4}\sqrt{r^2-s^2},
  \end{equation}
  where $c_0$  is function of  $r$ and   $c\neq 3$ is a constant, $c_0'$ denotes the derivative of $c_0$ with respect to $r$. By the formula of $Q$, it is clear that the case where $c_0=\frac{1}{2r^2}$ is excluded.

  Plugging the functions $P$ and $Q$ given by \eqref{P_Q_2D} into the compatibility conditions \eqref{Comp_C_C_2},  then  straightforward calculations by hand or by using Maple program,  we have the following two equations:
$$\frac{\phi_s}{\phi}(r^2-s^2)( (2c_0r^2-1)\sqrt{r^2-s^2} -(c+1)s)+  (2c_0r^2-1)s\sqrt{r^2-s^2}+2cr^2-(c+1)s^2=0,$$
\begin{equation*}
    \begin{split}
         &\frac{\phi_r}{\phi} (2c_0r^2-1)r^2\sqrt{r^2-s^2}+\frac{\phi_s}{\phi} \Big{(} \sqrt{r^2-s^2}( (r^2-s^2)(4c_0^2sr^4+sc^2)+(2c_0r^2-1)sr^2) \\
         &+  3s^2r^2+2c_0cs^2r^4+2c_0r^6+cr^4-2c_0cr^6-cr^2s^2-6c_0s^2r^4+4c_0s^4r^2-r^4-2s^4\Big{)} \\
         &+\sqrt{r^2-s^2}(4c_0^2s^2r^4+2c_0r^4+c^2s^2-r^2)+2c_0csr^4-sr^2+2c_0sr^4-csr^2-4c_0s^3r^2+2s^3=0.
    \end{split}
\end{equation*}

By solving the above equations  for $\frac{\phi_s}{\phi}$ and $\frac{\phi_r}{\phi}$, we have
  
 \small{ \begin{equation}
\label{Zhou)_2D_phi}
    \begin{split}
        \frac{\phi_s}{\phi}= &- \frac{(c+1)s^2-(2r^2c_0-1)s\sqrt{r^2-s^2}-2r^2c}{(r^2-s^2)((2r^2c_0-1)\sqrt{r^2-s^2}-(c+1)s)},   \\
         \frac{\phi_r}{\phi}=& - \frac{\sqrt{r^2-s^2}(2c(2c_0r^2-1)(1-c)(r^2-s^2)-r^2+2s^2+4c_0r^4+8c_0^2r^4s^2-4c_0^2r^6-8c_0r^2s^2)}{r(r^2-s^2)(2r^2c_0-1)((c+1)s-(2r^2c_0-1)\sqrt{r^2-s^2})}   \\
         &+ \frac{5cr^2s-2c^3r^2s-4cc_0'r^5s+12cc_0r^2s^3-14cc_0r^4s+4cc_0'r^3s^3+r^2s-4cs^3-2c_0r^4s}{r(r^2-s^2)(2r^2c_0-1)((c+1)s-(2r^2c_0-1)\sqrt{r^2-s^2})}\\
        &-\frac{4c_0r^2s^3+2c^3s^3-2s^3}{r(r^2-s^2)(2r^2c_0-1)((c+1)s-(2r^2c_0-1)\sqrt{r^2-s^2})} .
    \end{split}
\end{equation}}

     Now,  consider  the following example.

\begin{example}  This example is constructed from \cite[Theorem 3.3]{Zhou} and shows a non-compatibility between the functions $P$, $Q$ and $\phi$. Let $c=-1$ and $c_0=\frac{1}{r^2}$, then, by  substitution in the  formula of $ a(r)$, we have
  $$a(r)=\exp\left( \int -\frac{2c_0r^2-1+2c^2-2c}{r(2c_0r^2-1)}dr\right)=\exp\left( \int -\frac{5}{r}dr\right)=\frac{1}{r^5}.$$
  Now, we calculate   $F$  as follows
\begin{align*}
  F & =u\exp \left(\int  \frac{(c+1) s^{2}-\left(2 r^{2} c_{0}-1\right) s \sqrt{r^{2}-s^{2}}-2 r^{2} c}{\left(r^{2}-s^{2}\right)\left(\left(2 c_{0} r^{2}-1\right) \sqrt{r^{2}-s^{2}}-(c+1) s\right)} d s\right) a(r) \\
    & = \frac{u}{r^5}\exp \left(\int \frac{ -  s \sqrt{r^{2}-s^{2}}+2 r^{2}  }{\left(r^{2}-s^{2}\right)\left(\sqrt{r^{2}-s^{2}}\right)} d s\right)\\
    &=\frac{u}{r^5}  \sqrt{r^{2}-s^{2}}\exp \left(  \frac{2 s  }{ \sqrt{r^{2}-s^{2}} } \right).
\end{align*}
Using \eqref{P,Q} and the above formula of  $\phi(r,s)$,  we obtain that
\begin{equation}
\label{P_Q_from_phi}
P= -\frac{s}{r^2}-\frac{3}{4 r^2}\,\sqrt {{r}^{2}-{s}^{2}}, \quad Q=\frac{7}{8 r^2}-\frac{3 s^2}{r^4}-\frac{3s}{4 r^4}\,\sqrt {{r}^{2}-{s}^{2}}
\end{equation}
 On the other hand,  by \eqref{P_Q_2D}, the choice $c=-1$ and $c_0=\frac{1}{r^2}$ leads to 
  \begin{equation}
  \label{P_Q_from_c's}
P= -\frac{s}{r^2}-\frac{1}{r^2}\,\sqrt {{r}^{2}-{s}^{2}}, \quad Q=\frac{1}{r^2}-\frac{s^2}{2r^4}-\frac{s}{r^4}\,\sqrt {{r}^{2}-{s}^{2}}
.
\end{equation}
  It is clear that the functions $P$ and $Q$ in the equations \eqref{P_Q_from_phi} and \eqref{P_Q_from_c's} are different. This is because the functions $P$,  $Q$ and $\phi$ do not satisfy the compatibility conditions, one can see that the compatibility conditions are $C_1=0$ and $C_2=\frac{1}{r^2} \phi$.
\end{example}

The problem of the non-compatibility in the previous example comes from the factor $a(r)$. In \cite{Zhou}, it is mentioned that 
the function $a(r)$ is given by  $\phi(r,0)=a(r)$ and it is not clear   how   this formula is obtained, in the above example, one can see that $\phi(r,0)=\frac{1}{r^4}\neq a(r)$.
\medskip

In the previous example, by substitution by $c=-1$ and $c_0=\frac{1}{r^2}$ into  \eqref{Zhou)_2D_phi}, we have
$$\frac{\phi_s}{\phi}=-\frac{r^2s-s^3-2r^2\sqrt{r^2-s^2} }{(r^2-s^2)^2}, \quad \frac{\phi_r}{\phi}=-\frac{5r^4-11r^2s^2+6s^4-2r^2s\sqrt{r^2-s^2} }{r(r^2-s^2)^2}.$$
  Solving the above equations, we get that
  $$\phi=\frac{1}{r^6}\sqrt{r^{2}-s^{2}}\exp \left(  \frac{2 s  }{ \sqrt{r^{2}-s^{2}} } \right).$$
  That is, $a(r)=\frac{1}{r^6}$. Moreover,  one can see that   the above formula of $\phi$ together with \eqref{P,Q} yield the same formulae \eqref{P_Q_from_c's} of $P$ and $Q$.

\end{document}